\tikzset{join/.code=\tikzset{after node path={%
\ifx\tikzchainprevious\pgfutil@empty\else(\tikzchainprevious)%
edge[every join]#1(\tikzchaincurrent)\fi}}}
\tikzset{>=stealth',every on chain/.append style={join},
         every join/.style={->}}
\tikzstyle{labeled}=[execute at begin node=$\scriptstyle,
\begin{document}

\theoremstyle{definition}
\newtheorem{theorem}{Theorem}
\newtheorem{definition}[theorem]{Definition}
\newtheorem{problem}[theorem]{Problem}
\newtheorem{assumption}[theorem]{Assumption}
\newtheorem{corollary}[theorem]{Corollary}
\newtheorem{proposition}[theorem]{Proposition}
\newtheorem{example}[theorem]{Example}
\newtheorem{lemma}[theorem]{Lemma}
\newtheorem{observation}[theorem]{Observation}
\newtheorem{fact}[theorem]{Fact}
\newtheorem{question}[theorem]{Open Question}
\newtheorem{conjecture}[theorem]{Conjecture}
\newtheorem{addendum}[theorem]{Addendum}
\newtheorem{remark}[theorem]{Remark}
\newcommand{\uint}{{[0, 1]}}
\newcommand{\Cantor}{{\{0,1\}^\mathbb{N}}}
\newcommand{\name}[1]{\textsc{#1}}
\newcommand{\me}{\name{P.}}
\newcommand{\id}{\textrm{id}}
\newcommand{\dom}{\operatorname{dom}}
\newcommand{\Dom}{\operatorname{Dom}}
\newcommand{\codom}{\operatorname{CDom}}
\newcommand{\spec}{\operatorname{spec}}
\newcommand{\opti}{\operatorname{Opti}}
\newcommand{\optis}{\operatorname{Opti}_s}
\newcommand{\Baire}{\mathbb{N}^\mathbb{N}}
\newcommand{\hide}[1]{}
\newcommand{\mto}{\rightrightarrows}
\newcommand{\Sierp}{Sierpi\'nski }
\newcommand{\BC}{\mathcal{B}}
\newcommand{\C}{\textrm{C}}
\newcommand{\lpo}{\textrm{LPO}}
\newcommand{\llpo}{\textrm{LLPO}}
\newcommand{\leqW}{\leq_{\textrm{W}}}
\newcommand{\leW}{<_{\textrm{W}}}
\newcommand{\equivW}{\equiv_{\textrm{W}}}
\newcommand{\equivT}{\equiv_{\textrm{T}}}
\newcommand{\geqW}{\geq_{\textrm{W}}}
\newcommand{\pipeW}{|_{\textrm{W}}}
\newcommand{\nleqW}{\nleq_\textrm{W}}
\newcommand{\leqsW}{\leq_{\textrm{sW}}}
\newcommand{\equivsW}{\equiv_{\textrm{sW}}}
\newcommand{\Sort}{\operatorname{Sort}}
\newcommand{\RT}{\mathrm{RT}}
\newcommand{\TT}{\mathrm{TT}^1}
\newcommand{\tcn}{\mathrm{TC}_\mathbb{N}}
\newcommand{\accn}{\mathrm{ACC}_\mathbb{N}}
\newcommand{\ecfc}{\mathrm{eCFC}_\mathbb{N}}
\newcommand{\Fin}{\operatorname{Fin}}

\newcommand{\pitc}{\Pi^0_2\textrm{C}}

\title{More on the degree of indivisibility of $\mathbb{Q}$}

\author{
Arno Pauly
\institute{Swansea University\\Swansea, UK}
\email{Arno.M.Pauly@gmail.com}
}

\def\titlerunning{More on the degree of indivisibility of $\mathbb{Q}$}
\def\authorrunning{A. Pauly}
\maketitle

\begin{abstract}
We study the complexity of the computational task ``Given a colouring $c : \mathbb{Q} \to \mathbf{k}$, find a monochromatic $S \subseteq \mathbb{Q}$ such that $(S,<) \cong (\mathbb{Q},<)$''. The framework is Weihrauch reducibility. Our results answer some open questions recently raised by Gill, and by Dzhafarov, Solomon and Valenti.
\end{abstract}

\section{Introduction}
We call a structure $\mathcal{M}$ over $\mathbb{N}$ \emph{indivisible}, if for every colouring of $\mathbb{N}$ with finitely many colours there is a monochromatic isomorphic copy of $\mathcal{M}$. A typical example of an indivisible structure is $(\mathbb{Q},<)$. For a fixed indivisible structure $\mathcal{M}$ we can then study the computational task $\mathrm{Ind}\mathcal{M}$, which receives as input a $k$-colouring of $\mathbb{N}$ and which has to output a monochromatic copy of $\mathcal{M}$. This programme was recently formulated by Kenneth Gill \cite{gill-phd,gill-arxiv}, who obtained results about the Weihrauch degree of $\mathrm{Ind}\mathbb{Q}$ and some other structures. In a largely independent development, Dzhafarov, Solomon and Valenti \cite{damir-reed-manlio} studied the Weihrauch degree of the tree pigeon hole principle $\mathrm{TT}^1_+$, which can be seen as indivisibility of the full binary tree with relations for ``is in the left subtree below'' and ``is in the right subtree below''. It is easy to see that $\mathrm{Ind}\mathbb{Q} \equivW \mathrm{TT}^1_+$ (and for any fixed number of colours, the corresponding restrictions are equivalent, too).

\section{Background}
For more context on divisibility and computability we refer to \cite{gill-phd,gill-arxiv}, for context on the tree pigeon principle to \cite{damir-reed-manlio}. The required background on Weihrauch reducibility is provided in \cite{pauly-handbook}.

Of particular relevance for our investigation are the pigeon hole principles $\RT^1_k$, which take as input some $c : \mathbb{N} \to \mathbf{k}$ and return some $i \in \mathbf{k}$ such that $c^{-1}(i)$ is infinite. We can view these principles as $\mathrm{Ind}(\mathbb{N},<)_k$. The notation stems from viewing them as the 1-dimensional case of Ramsey's theorem. We write $\RT^1_+$ for the principle where $k$ is not fixed, but provided as part of the input.

We also refer to the principle $\tcn$ from \cite{paulyneumann}, which takes as input an enumeration of the complement of some $A \subseteq \mathbb{N}$ as input, and returns some $n \in A$ if $A$ is non-empty, and some $n \in \mathbb{N}$ otherwise.

The first-order part of a Weihrauch degree $f$. denoted by $^1(f)$, is the maximal Weihrauch degree reducible to $f$ having a representative with codomain $\mathbb{N}$. The notion was proposed in \cite{damir-reed-keita}, and studied further in \cite{soldavalenti,pauly-valenti,paulysolda}. The investigation of $^1(\TT_k)$ was a key goal in \cite{damir-reed-manlio}. In particular, they prove that $^1(\TT_+) \leqW \RT^1_+ \star \C_\mathbb{N}$.

\section{Locating $\mathrm{TT}^1_k$}
We begin our investigation of $\TT_k$ by locating it in the Weihrauch lattice relative to some benchmark principles. The results are depicted in Figure \ref{figure:reducibilities}, which exhibits a grid-structure with several families of Weihrauch degrees parameterized by a natural number. There are Weihrauch reductions if we increase the parameter, or if we move to a more complicated family; but never from a principle with higher parameter to one with lower parameter, even if this is accompanied by changing the family. Figure \ref{figure:reducibilities} expands upon a similar figure from Gill's dissertation \cite{gill-phd} by adding the top row referring to $(\RT^1_\ell)'$ (also known as $\mathrm{D}^2_\ell$), and by completing the proof of absence of any additional reductions.

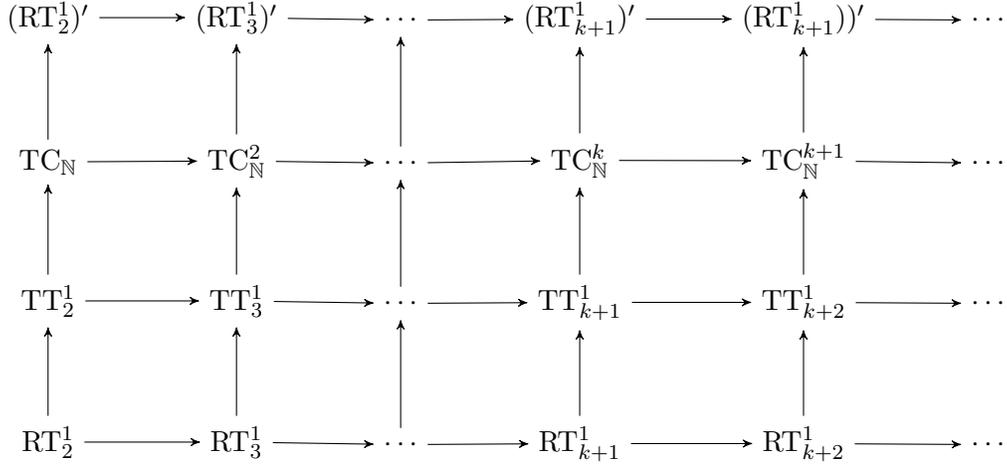
\begin{figure}[htbp]
\centering
\begin{tikzpicture}
  \matrix (m) [matrix of math nodes, row sep=3em, column sep=3em]
    { (\RT^1_2)'  & (\RT^1_3)'  & \cdots  & (\RT^1_{k+1})' & (\RT^1_{k+1}))' & \cdots \\
     \tcn  & \tcn^2  & \cdots  &  \tcn^k & \tcn^{k+1} & \cdots \\
      \TT_2  & \TT_3  & \cdots  & \TT_{k+1} & \TT_{k+2} & \cdots\\
      \RT^1_2 & \RT^1_3 & \cdots & \RT^1_{k+1} & \RT^1_{k+2} & \cdots\\ };
  { [start chain] \chainin (m-1-1);
      \chainin (m-1-2);
    \chainin (m-1-3);
    \chainin (m-1-4);
    \chainin (m-1-5);
        \chainin (m-1-6);}
  { [start chain] \chainin (m-2-1);
      \chainin (m-2-2);
    \chainin (m-2-3);
    \chainin (m-2-4);
    \chainin (m-2-5);
        \chainin (m-2-6);}
  { [start chain] \chainin (m-3-1);
      \chainin (m-3-2);
    \chainin (m-3-3);
    \chainin (m-3-4);
    \chainin (m-3-5);
        \chainin (m-3-6);}
      { [start chain] \chainin (m-4-1);
      { [start branch=E] \chainin (m-3-1); \chainin (m-2-1); \chainin (m-1-1);}
    \chainin (m-4-2);
      { [start branch=A] \chainin (m-3-2); \chainin (m-2-2); \chainin (m-1-2);}
    \chainin (m-4-3);
      { [start branch=B] \chainin (m-3-3); \chainin (m-2-3); \chainin (m-1-3);}
    \chainin (m-4-4);
           { [start branch=C] \chainin (m-3-4); \chainin (m-2-4); \chainin (m-1-4);}
    \chainin (m-4-5);
          { [start branch=D] \chainin (m-3-5); \chainin (m-2-5); \chainin (m-1-5);}
    \chainin (m-4-6); }
\end{tikzpicture}
\caption{All Weihrauch reductions (up to transitivity) between the principles $\RT^1_k$, $\TT_n$, $\tcn^j$ and $(\RT^1_\ell)'$.}
\label{figure:reducibilities}
\end{figure}

That $\RT^1_k \leqW \TT_k$ is immediate, as observed in both \cite{gill-arxiv} and \cite{damir-reed-manlio}. That $\TT_{k+1} \leqW \tcn^k$ is a corollary of results from \cite{paulypradicsolda}, but we include a direct proof here for the sake of self-containedness. As observed by Dzhafarov, Solomon and Valenti \cite{damir-reed-manlio}, one way to prove $\mathrm{TT}^1_2$ is to ask for a vertex below which the colouring is constant, if there is one. If we have such a vertex, we can built a monochromatic subtree by just greedily searching for vertices of the right colour. This yields the reduction $\mathrm{TT}^1_2 \leqW \mathrm{TC}_\mathbb{N}$.

\begin{proposition}
\label{prop:tttcn}
$\TT_{k+1} \leqW \tcn^k$
\begin{proof}
For the first $\tcn$, we consider an $\sigma_0 \in 2^{<\omega}$ and $I_0 \subseteq \{0,1,\ldots,k-1\}$ with $|I_0| = k- 1$ and enumerate all such pairs except for the currently first one where all vertices below $\sigma_0$ have colours in $I_0$. For the second instance, we consider $\sigma_1 \in 2^{<\omega}$ and $I_1 \subseteq \{0,1,\ldots,k-1\}$ with $|I_1| = k - 2$m and enumerate all such pairs except the currently first one where $\sigma_1$ is an extension of $\sigma_0$, $I_1 \subset I_0$, and all vertices below $\sigma_1$ have a colours in $I_1$. The pattern is repeated to produce the remaining inputs to $\tcn$.

From $\tcn^k$, we then obtain some $((\sigma_0,I_0),(\sigma_1,I_1),\ldots,(\sigma_{k-1},I_{k-1}))$. Let $j < k$ be maximal such that for all $i < j$ we find $\sigma_{i+1}$ to be an extension of $\sigma_i$ and $I_{i+1} \subset I_i$. We claim that picking any colour $b \in I_j$ and trying the greedy construction to built a $b$-monochromatic subtree below $\sigma_j$ will succeed.

To see this, first consider the information we obtain from the first $\tcn$-instance. If some colour is not dense everywhere, we will eventually encounter a combination $\sigma_0$ and $I_0$ which remains stable (because the colour in $\mathbf{k} \setminus I_0$ does not occur below $\sigma_0$). If all colours are dense everywhere, our greedy construction will succeed for sure. If the first instance for $\tcn$ settles on a specific input, then either all remaining $k-1$-many colours are dense below $\sigma_0$ (and by searching below $\sigma_0$ we already succeed), or the second input will eventually encounter a $\sigma_1$ below $\sigma_0$ and some $I_1 \subsetneq I_0$ where the removed colour does not appear below $\sigma_1$. We choose $\sigma_j$ and $I_j$ such that they definitely reflect all of the defined information, which is why we know that all colours in $I_j$ will be dense below $\sigma_j$.
\end{proof}
\end{proposition}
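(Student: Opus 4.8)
The plan is to reduce the task of building a monochromatic copy of the full binary tree (for a $(k{+}1)$-colouring) to $k$ sequential applications of $\tcn$, each application peeling off one colour that fails to be dense below a suitably chosen node. The guiding observation, already noted for $k=1$ in the excerpt, is that if all colours that remain ``in play'' are dense below some node $\sigma$, then a naive greedy search for any one such colour produces a monochromatic subtree isomorphic to $2^{<\omega}$.

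First I would set up the input to the first $\tcn$: enumerate into the complement all pairs $(\sigma_0, I_0)$ with $\sigma_0 \in 2^{<\omega}$, $I_0 \subseteq \mathbf{k{+}1}$, $|I_0| = k$, such that we observe some vertex below $\sigma_0$ coloured with the unique colour of $\mathbf{k{+}1}\setminus I_0$ --- equivalently, we keep a pair ``alive'' precisely as long as the omitted colour has not yet been seen below $\sigma_0$. The key point is that this enumeration is computable from the colouring, and $A$ (the set of indices still alive) is nonempty exactly when some colour fails to be dense everywhere; if $A$ is empty, $\tcn$ may return garbage, but then all $k{+}1$ colours are dense everywhere and we win regardless. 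For the $(i{+}1)$-st instance I would do the analogous thing one level down: enumerate all $(\sigma_{i+1}, I_{i+1})$ with $|I_{i+1}| = k-i$ into the complement unless $\sigma_{i+1} \supseteq \sigma_i$, $I_{i+1} \subsetneq I_i$, and the colour of $I_i \setminus I_{i+1}$ does not appear below $\sigma_{i+1}$ --- here $(\sigma_i, I_i)$ is the output of the previous $\tcn$. This is legitimate in the compositional product $\tcn \star \tcn \star \cdots \star \tcn$ since each instance may depend on the previous outputs.

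Given the output $((\sigma_0, I_0), \ldots, (\sigma_{k-1}, I_{k-1}))$ of $\tcn^k$, I would let $j < k$ be maximal with the property that the nesting conditions $\sigma_0 \subseteq \sigma_1 \subseteq \cdots \subseteq \sigma_j$ and $I_0 \supsetneq I_1 \supsetneq \cdots \supsetneq I_j$ all hold, pick any $b \in I_j$, and run the greedy search for a $b$-monochromatic copy of $2^{<\omega}$ below $\sigma_j$. The correctness argument is the heart of the proof: I would argue by cases on how many of the $\tcn$-instances actually ``settled'' on a genuine (alive-forever) answer. If the first instance never settles, every colour is dense everywhere and we win; if it settles on $(\sigma_0, I_0)$ with the omitted colour genuinely absent below $\sigma_0$, then either all $k$ colours in $I_0$ are dense below $\sigma_0$ (so $j = 0$ works) or the second instance will find a proper refinement, and so on down the chain. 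At the maximal $j$, every colour in $I_j$ is dense below $\sigma_j$ by construction, so the greedy search succeeds. The main obstacle is bookkeeping: making sure that the ``maximal $j$'' extracted from the output is always one at which the density guarantee provably holds, and carefully handling the case where $\tcn$ returns a meaningless value because its input set was empty --- in that case one must verify that the failure of the nesting condition at that stage means the colours still in play one level up were already all dense, so a smaller $j$ suffices.
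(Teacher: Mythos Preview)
Your argument is essentially the same as the paper's: build a descending chain of pairs $(\sigma_i,I_i)$ with each $I_i$ dropping one colour that is absent below $\sigma_i$, take the maximal $j$ where the chain is coherent, and run the greedy construction for any $b \in I_j$ below $\sigma_j$. The case analysis for correctness (``either all remaining colours are dense below $\sigma_i$, or the next instance settles on a genuine refinement'') is exactly the paper's.

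The one point where you diverge is that you feed the \emph{output} $(\sigma_i,I_i)$ of the $i$-th $\tcn$ into the construction of the $(i{+}1)$-st instance, and you explicitly justify this by working in the compositional product $\tcn \star \cdots \star \tcn$. But $\tcn^k$ in the statement denotes the \emph{parallel} product. The paper handles this by the ``currently first'' device: rather than keeping alive every pair whose omitted colour has not yet appeared, it keeps alive only the \emph{least} such pair in a fixed enumeration. The second instance then refers not to the first instance's eventual output but to its \emph{current} candidate, which is computable from the colouring at each stage; thus all $k$ instances can be built in parallel. Your construction is correct as a reduction to $\tcn^{\star k}$; to match the stated bound you should either adopt this ``currently first'' trick, or note that it yields $\tcn^{\star k} \leqW \tcn^k$ generically.
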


The jump of $\RT^1_k$ is often denoted by $\mathrm{D}^2_k$ in the literature on Weihrauch degrees or reverse mathematics of Ramsey-like theorems. We can think of $(\RT^1_k)'$ as receiving a $\Delta^0_2$-colouring $c : \mathbb{N} \to \mathbf{k}$ and having to return an infinite monochromatic subset. Knowing a suitable colour is insufficient to compute an infinite set of that colour, as we do not actually know the colour of a given number. In  \cite{damir-reed-manlio}, it is shown that $\TT_k \leqW (\RT^1_k)'$, which we shall improve to $\tcn^k \leqW (\RT^1_{k+1})'$ as follows:

\begin{proposition}
$\tcn^k \leqW (\RT^1_{k+1})'$
\begin{proof}
We are given $k$ $\tcn$-instances $A_1,\ldots,A_k$, and we may assume without loss of generality that each $A_i$ is either empty or cofinite. We define $c : \mathbb{N} \to (\mathbf{k+1})$ by letting $c(n) = |\{i \leq k \mid n \in A_i\}|$. We have co-enumerations of the sets $A_i$ available, which suffices to compute $c$ as a $\Delta^0_2$-colouring. If there are $\ell$ empty sets amongst the $A_i$, then the colours greater than $k - \ell$ never occur, and all but finitely many $n$ will receive the colour $k - \ell$. Moreover, any $n$ with $c(n) = k - \ell$ is included in all non-empty $A_i$, and thus is a correct to all the $\tcn$-instances at the same time. Thus, we can answer all $\tcn$-instances by returning the first element of the infinite $c$-homogeneous set we obtain from $(\RT^1_{k+1})'$.
\end{proof}
\end{proposition}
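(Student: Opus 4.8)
The plan is to combine two ingredients: a normalisation that puts every $\tcn$-instance into a very simple shape, and a single $\Delta^0_2$-colouring that records, for each natural number, in how many of the instances it survives. Given $k$ $\tcn$-instances $A_1,\dots,A_k$, I first argue that we may assume each $A_i$ is either empty or cofinite, and then set $c(n) := |\{\, i \le k : n \in A_i \,\}| \in \{0,\dots,k\}$. Feeding $c$ to $(\RT^1_{k+1})'$ returns an infinite $c$-homogeneous set $H$, and I claim any element of $H$ is a simultaneous solution to all $k$ instances.

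For the normalisation: an instance of $\tcn$ is an enumeration of $\mathbb{N}\setminus A$, so at each stage $s$ we may look at the least element $m_s$ not yet enumerated out. The sequence $(m_s)_s$ is non-decreasing, it converges to $\min A$ when $A \ne \emptyset$, and it tends to infinity when $A = \emptyset$. Replacing $A$ by $B := \{\, s \in \mathbb{N} : m_t = m_s \text{ for all } t \ge s \,\}$ therefore yields a set that is a cofinite interval when $A \ne \emptyset$ and empty when $A = \emptyset$; its complement is c.e.\ relative to the given enumeration, so $B$ is itself a legitimate $\tcn$-instance, and from any $s \in B$ (together with the original input) we can compute $m_s = \min A \in A$. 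Carrying this out in parallel on all $k$ instances reduces $\tcn^k$ to its restriction to instances that are cofinite or empty, which is what we wanted.

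It then remains to verify that $c$ works. The co-enumerations of the $A_i$ give, for each $i$ and stage $s$, the finite set of numbers enumerated out of $A_i$ so far; letting $c_s(n)$ be $k$ minus the number of $i$ for which $n$ has been enumerated out by stage $s$, we obtain a computable, pointwise non-increasing approximation with $\lim_s c_s(n) = c(n)$, which is an acceptable name for $c$ as input to the jump. Now suppose exactly $\ell$ of the $A_i$ are empty, so that $k-\ell$ of them are cofinite. Then $c$ never takes a value above $k-\ell$, since no $n$ can lie in more than $k-\ell$ of the $A_i$; and $c$ takes each value strictly below $k-\ell$ only finitely often, since every sufficiently large $n$ lies in all $k-\ell$ of the cofinite $A_i$ and hence gets colour exactly $k-\ell$. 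So $k-\ell$ is the unique colour with infinite preimage, and therefore the set $H$ returned by $(\RT^1_{k+1})'$ satisfies $c \restriction H \equiv k-\ell$. Consequently every $n_0 \in H$ lies in all of the non-empty $A_i$ at once, so outputting $(n_0,\dots,n_0)$ answers all $k$ instances simultaneously.

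The step that needs the most care — rather than a genuine obstacle — is making sure the two ingredients fit together. The counting colouring only has the decisive structure ``one infinite colour class, all of whose members are universal solutions'' after the instances have been normalised to be cofinite or empty, and the argument genuinely relies on $(\RT^1_{k+1})'$ returning the homogeneous set and not just the winning colour: the colour $k-\ell$ alone would tell us how many instances are empty but not which, and we cannot decide the $\Delta^0_2$-colouring to locate a monochromatic point ourselves. With those two points settled, everything else is routine bookkeeping.
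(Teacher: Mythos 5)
Your proof is correct and follows essentially the same route as the paper: normalise each instance to be empty or cofinite, use the counting colouring $c(n)=|\{i : n\in A_i\}|$ as a $\Delta^0_2$ input to $(\RT^1_{k+1})'$, and read off a universal solution from the homogeneous set. The only difference is that you spell out the ``without loss of generality'' normalisation explicitly (via the stabilisation set $B$), which the paper leaves implicit.
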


That $\RT^1_{k+1} \leW \TT_{k+1} \leW \tcn^k \leW (\RT^1_{k+1})'$ follows from known results, in particular ones from \cite{gill-arxiv,damir-reed-manlio}. Other than $(\RT^1_{k+1})' \nleqW \tcn^k$ (which holds because $\tcn^k$ can only give computable answers, while $(\RT^1_{k+1})'$ has a computable instance with no computable solution), the separations also follow from results in this article. That $\TT_2 \nleqW \RT^1_k$ is Corollary \ref{corr:rt1knleqttk}, that $\tcn \nleqW \TT_{k}$ is Corollary \ref{corr:cnnleqwttk}; both below.

To prove that in Figure \ref{figure:reducibilities} there are no additional reductions between the depicted principles, it is sufficient and necessary to show that $\RT^1_{k+1} \nleqW (\RT^1_k)'$ for all $k \in \mathbb{N}$. This follows from the jump inversion theorem for Weihrauch reducibility:

\begin{proposition}
$\RT^1_{k+1} \nleqW^* (\RT^1_k)'$ for all $k \in \mathbb{N}$.
\begin{proof}
The jump inversion theorem for Weihrauch reducibility (by Brattka, H\"olzl and Kuyper \cite{hoelzl2}) states that if $f' \leqW g'$ relative to oracle $p$, then $f \leqW g$ relative to $p'$. Since $\mathrm{RT}^1_{k+1} = (\C_{k+1})'$, we find that $\RT^1_{k+1} \leqW^* (\RT^1_k)'$ would imply $\C_{k+1} \leqW^* \RT^1_k$, which is false due to reasons of cardinality alone.
\end{proof}
\end{proposition}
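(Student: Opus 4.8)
The plan is to reduce the claim to a failure of finite-choice reducibility by one application of jump inversion. First I would recall the two structural facts that make the $\RT^1_n$ tractable here: up to Weihrauch equivalence, $\RT^1_n$ is the jump of finite closed choice, $\RT^1_n \equivW (\C_n)'$ — one thinks of $\RT^1_n$ as choice on a nonempty $\Pi^0_2$ subset of $\mathbf n$, namely the set of colours occurring infinitely often, and the general passage between $\Pi^0_2$-choice and the jump of closed choice. Consequently $\RT^1_{k+1} \equivW (\C_{k+1})'$ and $(\RT^1_k)' \equivW ((\C_k)')'$.

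Next I would invoke the jump inversion theorem for Weihrauch reducibility of Brattka, H\"olzl and Kuyper \cite{hoelzl2}: if $f' \leqW g'$ relative to some oracle $p$, then $f \leqW g$ relative to $p'$; in particular $f' \leqW^* g'$ implies $f \leqW^* g$. Assume towards a contradiction that $\RT^1_{k+1} \leqW^* (\RT^1_k)'$. Rewriting both sides as jumps, this reads $(\C_{k+1})' \leqW^* ((\C_k)')'$, so applying jump inversion with $f = \C_{k+1}$ and $g = (\C_k)'$ yields $\C_{k+1} \leqW^* (\C_k)' \equivW \RT^1_k$.

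It then remains only to see that $\C_{k+1} \nleqW^* \RT^1_k$, which is a pure cardinality obstruction and is where the only genuine content sits, although it is folklore. A relativized Weihrauch reduction from $\C_{k+1}$ would, by an adversary argument on the enumeration presenting the $\C_{k+1}$-instance, have to pin down one of $k+1$ possible unique solutions while the backward functional may be handed only one of the at most $k$ candidate answers an $\RT^1_k$-instance can ever offer; delaying the enumeration one produces two $\C_{k+1}$-instances on which the forward functional's $\RT^1_k$-instance is forced to share an infinite colour, so the reduction must commit to the same answer on both, yet it is required to return their two distinct $\C_{k+1}$-solutions — a contradiction. Equivalently, this is the well-known fact that a single jump does not collapse the finite-choice hierarchy, given $\RT^1_k \equivW (\C_k)'$. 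I expect the bookkeeping of this last adversary argument to be the fiddliest point, but it is entirely routine; everything preceding it is formal manipulation of jumps, and the real idea is simply recognising that jump inversion applies.
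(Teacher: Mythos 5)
Your proof follows the paper's argument exactly: both apply the Brattka--H\"olzl--Kuyper jump inversion theorem to the identity $\RT^1_{k+1} \equivW (\C_{k+1})'$ (your extra rewriting of $(\RT^1_k)'$ as $((\C_k)')'$ is harmless but unnecessary, since jump inversion applies with $g = \RT^1_k$ directly) and thereby reduce the claim to $\C_{k+1} \nleqW^* \RT^1_k$, which the paper likewise dismisses on cardinality grounds. Your closing adversary sketch is slightly looser than the real argument (the backward functional also reads the $\C_{k+1}$-input, so one must diagonalize by successively excluding the at most $k$ values committed to across all colours, not merely observe that two instances share an infinitely occurring colour), but the paper offers even less detail at that point, so this matches its level of rigour.
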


\section{Finite guessability}

We will formalize the idea that for some problems we may not be able to compute a solution, however, we are able to compute finitely many guesses with the guarantee that at least one of them is correct. This comes in three variations: In one the number of guesses is fixed in advanced, in one the number of guesses is fixed at some point of the computation, and in the final the computation is always allowed to make one more guess, it only needs to stop doing so eventually (but never confirm that this is happening).

\begin{definition}
\begin{enumerate}
\item We call $f : \mathbf{X} \mto \mathbf{Y}$ $k$-guessable, if the map $\breve{f}_k : \mathbf{X} \mto \overline{\mathbf{Y}}^k$ with $(y_0,\ldots,y_{k-1}) \in \breve{f}_k(x)$ iff $\exists i < k \ y_i \in f(x)$ is computable.
\item We call $f : \mathbf{X} \mto \mathbf{Y}$ finitely guessable, if the map $\breve{f}_* : \mathbf{X} \mto \overline{\mathbf{Y}}^*$ with $(y_0,\ldots,y_\ell) \in \breve{f}_*(x)$ iff $\exists i \leq \ell \ y_i \in f(x)$ is computable.
\item We call $f : \mathbf{X} \mto \mathbf{Y}$ eventually-finitely guessable, if the map $\breve{f}_{<\omega} : \mathbf{X} \mto \overline{\mathbf{Y}}^{<\omega}$ with $(y_0,\ldots,y_\ell) \in \breve{f}_{<\omega}(x)$ iff $\exists i \leq \ell \ y_i \in f(x)$ is computable.
\end{enumerate}
\end{definition}

\begin{proposition}
\label{prop:guessabledownwards}
Being $k$-guessable, being finitely guessable and being eventually-finitely guessable are all closed downwards under Weihrauch reducibility.
\begin{proof}
A Weihrauch reduction $f \leqW g$ lifts to Weihrauch reductions $\breve{f}_k \leqW \breve{g}_k$, $\breve{f}_* \leqW \breve{g}_*$ and $\breve{f}_{<\omega} \leqW \breve{g}_{<\omega}$. For this, the inner reduction witness is kept as it is. The outer reduction witness is applied component-wise. For this, it is important that we use the completions of the codomain, as this allows us to extend the outer reduction witness to a total computable map.
\end{proof}
\end{proposition}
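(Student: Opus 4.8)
The plan is to show that any Weihrauch reduction $f \leqW g$ lifts to Weihrauch reductions $\breve{f}_k \leqW \breve{g}_k$, $\breve{f}_* \leqW \breve{g}_*$ and $\breve{f}_{<\omega} \leqW \breve{g}_{<\omega}$. Since, by definition, $g$ being $k$-guessable (resp.\ finitely, eventually-finitely guessable) means precisely that $\breve{g}_k$ (resp.\ $\breve{g}_*$, $\breve{g}_{<\omega}$) is computable, and any problem Weihrauch-reducible to a computable problem is itself computable, this suffices. I carry out the $k$-guessable case; the other two run the same way, only with the length of the output tuple allowed to vary.

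Fix a reduction $f \leqW g$ with computable, possibly partial, inner and outer witnesses $H$ and $K$: whenever $p$ is a name of $x \in \dom(f)$, $H(p)$ is a name of some $x' \in \dom(g)$, and whenever $q$ is a name in $\mathbf{Y}$ of some $z \in g(x')$, the value $K(p,q)$ is a name of an element of $f(x)$. Because $\dom(\breve{f}_k) = \dom(f)$ and $\dom(\breve{g}_k) = \dom(g)$, the very same $H$ serves directly as the inner witness for $\breve{f}_k \leqW \breve{g}_k$. For the outer witness, take a name $r$ of a point $(z_0,\dots,z_{k-1}) \in \breve{g}_k(x') \subseteq \overline{\mathbf{Y}}^k$; decoding $r$ computably we obtain names $q_0,\dots,q_{k-1}$ in $\overline{\mathbf{Y}}$ of $z_0,\dots,z_{k-1}$, and by the definition of $\breve{g}_k$ at least one of the $z_i$ is a genuine element of $g(x') \subseteq \mathbf{Y}$. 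We then output $(\tilde K(p,q_0),\dots,\tilde K(p,q_{k-1}))$ for a suitable total computable $\tilde K$; for the good index $i$ this component is a name of an element of $f(x)$, so the whole tuple names a point of $\breve{f}_k(x)$, which is what we need.

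The step needing care is the construction of $\tilde K$: the honest witness $K$ is only partial, so on a component $q_i$ that names the artificial point $\bot$ of the completion, or names a genuine element of $\mathbf{Y}$ lying outside $g(x')$, the run $K(p,q_i)$ may diverge or emit only finitely many symbols, which is not a legitimate infinite name. This is exactly what passing to the completion in the codomain is for. The space $\overline{\mathbf{Y}}$ carries a total representation in which a name is permitted to postpone producing new information indefinitely, a permanently stalled name denoting $\bot$; so, composing $K$ with the partial computable translation of $\overline{\mathbf{Y}}$-names into $\mathbf{Y}$-names on its second argument and then re-encoding its output in this delayed format gives a total computable $\tilde K$ whose value is always a name in $\overline{\mathbf{Y}}$, naming the same element as $K(p,q)$ whenever the latter converges to a genuine name and naming $\bot$ otherwise. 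Applying $\tilde K$ component-wise therefore yields a bona fide name of a point of $\overline{\mathbf{Y}}^k$ whose good component is correct, establishing $\breve{f}_k \leqW \breve{g}_k$. That completions totalise partial computable maps in this manner is a standard fact about completions of represented spaces.

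For $\breve{f}_* \leqW \breve{g}_*$ and $\breve{f}_{<\omega} \leqW \breve{g}_{<\omega}$ the construction is identical, except that the number $\ell+1$ of output components is no longer the fixed $k$ but the length returned --- respectively lazily committed to --- by the realizer of $\breve{g}_*$, resp.\ $\breve{g}_{<\omega}$; the representations of $\overline{\mathbf{Y}}^*$ and $\overline{\mathbf{Y}}^{<\omega}$ are designed to support exactly this, so applying $\tilde K$ to each of the $\ell+1$ component names and forwarding the length data provides the outer witness. I expect the totalisation of $K$ through the completion to be the only genuinely delicate step; the rest is bookkeeping about domains and about the representations of finite products and sequence spaces.
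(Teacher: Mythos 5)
Your proposal is correct and follows the same approach as the paper: keep the inner reduction witness, apply the outer witness component-wise, and use the completion of the codomain to totalise the (partial) outer witness. Your write-up simply spells out in more detail the totalisation step that the paper's proof states in one sentence.
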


Clearly being $k$-guessable for some $k$ implies being finitely guessable, which in turn implies being eventually-finitely guessable. The problem $\C_\mathbb{N}$ is eventually-finitely guessable, but not finitely guessable. The problem $\lpo^*$ is finitely guessable, but for no $k \in \mathbb{N}$ it is $k$-finitely guessable. We will see in Section \ref{sec:guessabilitytt} that these notions can also be separated by $^1(\TT_2)$ and $^1(\TT_3)$. The following will establish a convenient example of a problem which is not $k$-guessable:

\begin{proposition}
\label{prop:accnknotkguessable}
$\accn^k$ is not $k$-guessable.
\begin{proof}
Assume that $\accn^k$ were $k$-guessable, i.e.~that there was a computable function $F$ producing an element of $\left (\overline{(\mathbb{N}^k)}\right )^k$ given an $\accn^k$-instance such that some component $(n_0,\ldots,n_{k+1}) \in \mathbb{N}^k$ constitutes a correct answer to this instance. We can begin building a neutral $\accn$-instance and always extend in a way that excludes no solutions while monitoring how $F$ acts on it. If we find that the $i$-th output of $F$ belongs to $\mathbb{N}^k$, with $n_i$ being the $i$-th number in this tuple, we make $n_i$ the wrong answer to the $i$-th $\accn$-instance we are building. This way, we can diagonalize against any guess that $F$ can make.
\end{proof}
\end{proposition}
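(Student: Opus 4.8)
The plan is a diagonalization by contradiction against a hypothetical computable guessing function. Suppose $\accn^k$ were $k$-guessable, so there is a computable $F$ that, given any $\accn^k$-instance $(A_0,\ldots,A_{k-1})$, outputs a $k$-tuple $(q_0,\ldots,q_{k-1})$ of names for elements of $\overline{\mathbb{N}^k}$, with the guarantee that for at least one $i<k$ the name $q_i$ codes a genuine tuple $(n^i_0,\ldots,n^i_{k-1})\in\mathbb{N}^k$ that correctly solves $(A_0,\ldots,A_{k-1})$ --- meaning $n^i_j$ avoids the forbidden value of $A_j$ for every $j<k$. The idea is to build an adversary instance so that, whichever output component $F$ turns out to be honest on, that component is sabotaged.

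Concretely, I would start all $k$ component instances $A_0,\ldots,A_{k-1}$ as neutral $\accn$-instances (nothing yet enumerated into any of their complements), feed this to $F$, and simulate $F$ while monitoring its $k$ output components. The representation of the completion $\overline{\mathbb{N}^k}$ makes the event ``$q_i$ commits to a genuine element of the discrete space $\mathbb{N}^k$'' semidecidable from a prefix of $q_i$, after which the coded tuple can be read off and is final (this is the same feature of completions exploited in Proposition~\ref{prop:guessabledownwards}). So, whenever the $i$-th output component $q_i$ first commits to a tuple $(n^i_0,\ldots,n^i_{k-1})$, I enumerate its diagonal entry $n^i_i$ into the complement of the $i$-th instance $A_i$, and I do nothing else to any instance. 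Since $A_i$ is perturbed only in response to the single output component $q_i$, at most one value is ever forbidden in it, so each $A_i$ is a legitimate $\accn$-instance; and since only finitely much negative information is revealed by any stage, and $F$'s behaviour through any stage depends only on the finite prefix of the input produced so far, there is no circularity and the whole strategy is computable.

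For the contradiction: by the guarantee some $q_i$ codes a genuine tuple $t^i=(n^i_0,\ldots,n^i_{k-1})$ solving all of $A_0,\ldots,A_{k-1}$. In particular $q_i$ committed to $t^i$ at some stage, so by construction $n^i_i$ was enumerated into the complement of $A_i$ and is thus forbidden there; but $t^i$ solving $A_i$ means precisely that its $i$-th coordinate $n^i_i$ is \emph{not} forbidden in $A_i$ --- a contradiction. Hence no such $F$ exists. I do not anticipate a genuine obstacle here: the argument is a clean diagonalization, and the only points needing care are the diagonal bookkeeping (coordinate $i$ of component $i$ controls instance $A_i$, which is exactly what keeps every instance perturbed at most once) and the routine observation that the completion representation turns ``component $i$ becomes a genuine tuple'' into an observable event we may act upon.
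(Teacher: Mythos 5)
Your proof is correct and is essentially the same diagonalization as the paper's: monitor the hypothetical $F$ on an initially neutral instance, and when the $i$-th output component commits to a tuple, forbid its $i$-th coordinate in the $i$-th $\accn$-instance, so each instance is perturbed at most once and every honest guess is sabotaged. Your write-up just spells out the bookkeeping (semidecidability of the commitment event via the completion, and the at-most-one-removal constraint of $\accn$) more explicitly than the paper does.
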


\begin{proposition}
The following are equivalent for $f : \mathbf{X} \mto \mathbf{Y}$:
\begin{enumerate}
\item $f$ is $k$-guessable.
\item There exists some $g : \subseteq \Baire \to \mathbf{k}$ with $f \leqW g$.
\end{enumerate}
\begin{proof}
To prove $1 \Rightarrow 2$, fix a computable realizer $F$ of $\breve{f}_k$. Now let $g : \subseteq \Baire \to \mathbf{k}$ return on input $p \in \dom(\delta_\mathbf{X})$ the least $i$ such that the $i$-th component of $F(p)$ is a valid solution to $f(\delta_\mathbf{X}(p))$. It is clear that $f \leqW g$.

To prove $2 \Rightarrow 1$, we observe that $g : \subseteq \Baire \to \mathbf{k}$ is trivially $k$-guessable, which is then inherited by $f$ via Proposition \ref{prop:guessabledownwards}.
\end{proof}
\end{proposition}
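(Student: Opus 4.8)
The plan is to prove the two implications separately: $1 \Rightarrow 2$ by an explicit construction of the witnessing problem $g$, and $2 \Rightarrow 1$ by reducing to Proposition~\ref{prop:guessabledownwards}.

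For $1 \Rightarrow 2$, I would first fix a computable realizer $F : \subseteq \Baire \to \Baire$ of the (by hypothesis computable) map $\breve{f}_k$, so that whenever $p$ is a name of some $x \in \mathbf{X}$, $F(p)$ is a $\delta_{\overline{\mathbf{Y}}^k}$-name of a tuple $(y_0, \ldots, y_{k-1})$ with $y_i \in f(x)$ for at least one $i < k$. I would then define a partial function $g : \subseteq \Baire \to \mathbf{k}$ on $\dom(\delta_\mathbf{X})$ by letting $g(p)$ be the least $i < k$ for which the $i$-th component of $F(p)$ names an element of $f(\delta_\mathbf{X}(p))$; this is well-defined by the defining property of $\breve{f}_k$, and one should note that $g$ itself need not be computable --- it is merely the problem we reduce to. The reduction $f \leqW g$ then uses the identity as inner reduction witness (a name of an input to $f$ is already a legal input to $g$) and, as outer reduction witness, the map sending the pair consisting of the original name $p$ together with the answer $i \in \mathbf{k}$ returned by $g$ to the $i$-th component of $F(p)$.

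For $2 \Rightarrow 1$, I would observe that every single-valued $g : \subseteq \Baire \to \mathbf{k}$ is $k$-guessable for the cheapest conceivable reason: the constant function outputting a name of the tuple $(0, 1, \ldots, k-1) \in \overline{\mathbf{k}}^k$ is a computable realizer of $\breve{g}_k$, since whatever value $g(p) \in \mathbf{k}$ happens to be, it equals one of the $k$ guesses listed. Since $f \leqW g$ and $k$-guessability is closed downwards under Weihrauch reducibility by Proposition~\ref{prop:guessabledownwards}, it follows that $f$ is $k$-guessable.

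The only point requiring a little care is the bookkeeping with completions in the outer reduction witness of $1 \Rightarrow 2$: the $i$-th component of $F(p)$ is a priori only a $\delta_{\overline{\mathbf{Y}}}$-name, so one has to invoke the standard fact that once we know the named element genuinely lies in $\mathbf{Y}$ --- which we do, precisely because $i$ was selected by $g$ to be a good index --- a genuine $\delta_\mathbf{Y}$-name can be recovered computably. I do not expect this to present any real obstacle; it is exactly the reason the definition of $k$-guessability was phrased using completions of the codomain in the first place, and everything else is routine.
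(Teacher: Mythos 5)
Your proposal is correct and follows essentially the same route as the paper: the same construction of $g$ as ``the least good index'' for $1 \Rightarrow 2$, and the same appeal to trivial $k$-guessability of $g$ plus downward closure (Proposition \ref{prop:guessabledownwards}) for $2 \Rightarrow 1$. Your extra remarks --- spelling out the reduction witnesses and the recovery of a $\delta_\mathbf{Y}$-name from a $\delta_{\overline{\mathbf{Y}}}$-name of an element known to lie in $\mathbf{Y}$ --- are accurate details that the paper leaves implicit.
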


\begin{proposition}
The following are equivalent for $f : \mathbf{X} \mto \mathbf{Y}$:
\begin{enumerate}
\item $f$ is finitely guessable.
\item There exists some $g$ with $f \leqW \operatorname{Fin}(g)$
\end{enumerate}
\begin{proof}
To prove the implication $1 \Rightarrow 2$, fix a computable realizer $F$ of $\breve{f}_k$. Now consider the map $g : \subseteq \Baire \to \bigsqcup_{k in \mathbb{N}} \mathbf{k}$ which maps $p \in \dom(F)$ to $i \in \mathbf{k}$ for some $i$ such that $F$ makes $k$ guesses on $p$, and the $i$th guess is correct. It is clear that $f \leqW g$, and that $\operatorname{Fin}(g) \equivW g$. For the implication $2 \Rightarrow 1$, it suffices to observe that $\operatorname{Fin}(g)$ is finitely-guessable almost by construction, and the claim then follows via Proposition \ref{prop:guessabledownwards}.
\end{proof}
\end{proposition}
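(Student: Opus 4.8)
The plan is to prove the two implications separately, reusing the machinery behind Proposition~\ref{prop:guessabledownwards} and the preceding $k$-guessable characterization. The direction $2 \Rightarrow 1$ should be the soft one: for \emph{every} $g$, the problem $\Fin(g)$ is finitely guessable essentially by construction, because a solution requested of a $\Fin(g)$-instance comes packaged together with a finite list of candidates whose length is readable — so a computable realizer of $\breve{h}_*$ for $h := \Fin(g)$ is obtained simply by outputting that list. Given $f \leqW \Fin(g)$, Proposition~\ref{prop:guessabledownwards} then yields that $f$ is finitely guessable.

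For $1 \Rightarrow 2$ I would mirror the proof of the $k$-guessable characterization. Fix a computable realizer $F$ of $\breve{f}_*$. On a name $p$ of an input $x$, the map $F$ outputs a name of some tuple $(y_0,\dots,y_\ell) \in \overline{\mathbf{Y}}^*$ — and, crucially, the length $\ell + 1$ is encoded in this name, hence readable — with $y_i \in f(x)$ for at least one $i \leq \ell$. Define $g : \subseteq \Baire \to \bigsqcup_{k \in \mathbb{N}} \mathbf{k}$ by letting $g(p)$ record, in the $(\ell+1)$-th summand, the least index $i$ for which $y_i \in f(x)$. The map $g$ need not be computable, but that is irrelevant: we get $f \leqW g$ by taking the inner reduction to be the identity on names and the outer reduction to compute, from the output of $g$ together with the stored name $p$, the $i$-th component of $F(p)$, which (since $i$ was chosen correct) is a genuine $f$-solution; as in Proposition~\ref{prop:guessabledownwards} it is the passage to the completion $\overline{\mathbf{Y}}$ that lets this outer witness be made total and computable. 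It then remains to observe $\Fin(g) \equivW g$: a single $g$-query is the degenerate case of finitely many guesses, and conversely the codomain $\bigsqcup_{k} \mathbf{k}$ of $g$ already carries the index bound that renders the finitely-many-guesses slack of $\Fin$ harmless on instances of $g$. Chaining the reductions gives $f \leqW g \equivW \Fin(g)$.

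The one genuine obstacle I anticipate is aligning the bookkeeping of $\Fin$ with the definition of $\breve{f}_*$: one must check that the number of guesses made by a realizer of $\breve{f}_*$ is really visible to $g$ — it is, because $\overline{\mathbf{Y}}^* = \bigsqcup_n \overline{\mathbf{Y}}^n$ records the length of a tuple — and that $\Fin(g) \equivW g$ genuinely holds for the $g$ just constructed, rather than merely for some other class of problems; this last point rests on $g$ being single-valued and valued in the discrete space $\bigsqcup_k \mathbf{k}$, so that the relevant finite index set is always observable. Beyond that, the argument runs parallel to the two preceding propositions and is routine.
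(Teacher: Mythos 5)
Your proposal is correct and follows essentially the same route as the paper: for $1 \Rightarrow 2$ you build exactly the same $g : \subseteq \Baire \to \bigsqcup_k \mathbf{k}$ from a realizer of $\breve{f}_*$ (using that the number of guesses is readable from the name) and note $f \leqW g \equivW \Fin(g)$, and for $2 \Rightarrow 1$ you use that $\Fin(g)$ is finitely guessable by construction together with Proposition~\ref{prop:guessabledownwards}. The only difference is that you spell out the bookkeeping the paper leaves implicit.
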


\paragraph*{Interaction with the algebraic operations}
It is easy to see that if $f$ and $g$ are $k$-guessable, finitely guessable or eventually-finitely guessable then so are $f \sqcup g$ and $f \sqcap g$. We have that $f \times g$ will be (eventually-)finitely guessable if $f$ and $g$ are, but of course being $k$-guessable is not preserved by products. It follows that being (eventually-)finitely guessable is preserved by $^*$. On the other hand $\lpo'$ is $2$-guessable, but $(\lpo')^{\mathrm{u}*} \equivW (\lpo')^\diamond \equivW \Pi^0_2\C_\mathbb{N}$ is not even eventually-finitely guessable as observed by Gill \cite{gill-arxiv}.

\begin{proposition}
\label{prop:guessabilityandcomposition}
\begin{enumerate}
\item If $f$ is $k$-guessable and $g$ is $\ell$-guessable, then $f \star g$ is $k\ell$-guessable.
\item If $f$ is $k$-guessable for some $k \in \mathbb{N}$ and $g$ is finitely guessable, then $f \star g$ is finitely guessable.
\item There is a finitely guessable $f$ and $2$-guessable $g$ such that $f \star g$ is not finitely guessable.
\item If $f$ is finitely guessable and $g$ is eventually-finitely guessable, then $f \star g$ is eventually-finitely guessable.
\item \cite{gill-arxiv} There is an eventually-finitely guessable $f$ and a $2$-guessable $g$ such that $f \star g$ is not eventually-finitely guessable.
\end{enumerate}
\begin{proof}
For all positive claims, we point out that computing the guesses themselves is straight-forward. As the guesses are elements of the completion of the original codomain, there are no obstacles pertaining to definedness. The issue is to obtain the required information about how many guesses are needed. For $1$ this is straight-forward. For $2$, we can run $\breve{g}_*$ on the provided input until we see the number of guesses made, and then multiply that number by $k$ to obtain how many guesses we want to use for $f \star g$. For $4$, we run $\breve{g}_{<\omega}$ which keeps making guesses. For each guess, we start running $\breve{f}_*$ on it and see whether $\breve{f}_*$ ever specifies how many guesses it is going to make. If this never happens, the guess was wrong anyway, and can be ignored. If $\breve{f}_*$ specifies a number of guesses, we use those to compute some guesses for $f \star g$. Overall, this yields a finite number of guesses, which is good enough.

It remains to provide a counter-example for $3$. For that, consider the map $\operatorname{Tmin} : \mathcal{O}(\mathbb{N}) \mto \mathbb{N}$ which returns the minimal element given an enumeration of a non-empty set, and any natural number given an enumeration of the empty set. We find that $\operatorname{Tmin}$ is not finitely-guessable (as specifying the number of guesses before we have seen any number enumerated into the input is not safe, essentially preempting the argument of Proposition \ref{prop:delayableguessable} below). However, we do have $\operatorname{Tmin} \leqW \lpo^* \star \lpo$ -- we use the single $\lpo$ instance to determine whether the set is empty or not, and then if the set is non-empty, use an $\lpo$ instance each to determine whether the finitely many numbers below the first encountered one will appear or not.
\end{proof}
\end{proposition}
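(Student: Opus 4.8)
The plan is to prove the three positive claims (1), (2), (4) by a single construction that differs only in the bookkeeping, to give an explicit counterexample for (3), and to invoke Gill for (5).

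For the positive claims, recall that an instance of $f \star g$ amounts to a $g$-instance $x$ together with a code $u$ for a computable map $\Phi_u$ sending any solution of $g(x)$ to an $f$-instance, a solution of $(f\star g)(u,x)$ being any solution of $f(\Phi_u(y))$ with $y\in g(x)$. Given such an input, I would first run a realizer of the appropriate guessing variant of $g$ on $x$ (that is, $\breve g_\ell$, $\breve g_*$ or $\breve g_{<\omega}$ according to the hypothesis) to produce guesses $y_0,y_1,\dots$ for the $g$-part; then for each $y_j$ I feed $\Phi_u(y_j)$ --- which is a legitimate point of the space of $f$-instances even when $y_j$ is not a genuine $g$-solution --- to a realizer of the corresponding guessing variant of $f$ ($\breve f_k$ or $\breve f_*$), obtaining guesses $z_{j,0},z_{j,1},\dots$. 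Collecting all the $z_{j,i}$ gives the guess list for $f\star g$; since every guess lives in a completion of a codomain there is no definedness issue, so the whole process is computable, and it is correct because for some $j^*$ we have $y_{j^*}\in g(x)$, whence $\Phi_u(y_{j^*})$ is a genuine $f$-instance and some $z_{j^*,i^*}$ solves it and thereby solves $(f\star g)(u,x)$. The only real content is counting the emitted guesses and knowing when that count is fixed: for (1), $\breve g_\ell$ emits exactly $\ell$ guesses and each $\breve f_k$ exactly $k$, so exactly $k\ell$ are emitted; for (2), one runs $\breve g_*$ until it announces its number $m$ of guesses (which happens at a finite stage), after which each of the $m$ branches contributes exactly $k$ guesses and one may announce $km$; for (4), one runs $\breve g_{<\omega}$, which keeps emitting $g$-guesses without ever announcing completion, and for each emitted $y_j$ one dovetails a run of $\breve f_*$ on $\Phi_u(y_j)$, ignoring any branch on which $\breve f_*$ never announces a count (on such a branch $\Phi_u(y_j)$ is not a genuine $f$-instance, so $y_j$ was not a genuine $g$-solution) and emitting the announced $m_j$ guesses otherwise --- only finitely many guesses arise in total, but this is never announced, which is exactly eventual-finite guessability.

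For the counterexample (3) I would take $g:=\lpo$, which is $2$-guessable since the guesses $0$ and $1$ cover every instance, and $f:=\lpo^*$, which is finitely guessable since, having read the arity $n$ of an instance, the $2^n$ binary strings of length $n$ form a complete list of guesses. One has $\operatorname{Tmin}\leqW\lpo^*\star\lpo$ --- use the single $\lpo$ to decide emptiness and, if the set is nonempty, run one further $\lpo$ for each number below the first enumerated element to locate the minimum --- so by Proposition \ref{prop:guessabledownwards} it suffices to show that $\operatorname{Tmin}$ is not finitely guessable. A realizer of $\breve{\operatorname{Tmin}}_*$ would have to commit to a number $\ell$ of guesses after reading a finite prefix of the input, and, running it on the empty enumeration, we may assume that no element has yet been enumerated at that point and moreover that some guess-slot has been committed to a natural number $v$ (as the empty instance forces). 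Feeding a strictly decreasing finite enumeration $N_0>N_1>\cdots$ whose entries are kept comfortably above both $v$ and $\ell$, correctness on the prefix ending in $N_t$ forces some guess-slot to equal $N_t$, and that slot can be neither the one holding $v$ nor any slot already pinned to a larger $N_j$; after $\ell$ such steps this would demand $\ell+1$ pairwise distinct slots, a contradiction. Finally, (5) is the result of Gill \cite{gill-arxiv}, which I would simply cite.

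I expect the main obstacle to be the non-guessability of $\operatorname{Tmin}$ in (3): the diagonalization must be arranged so that the decreasing enumeration never runs out of room below its current entries while still consuming a fresh guess-slot at every stage, and one has to be careful that the realizer genuinely commits to its number of guesses before any element of the input is visible. The positive parts, by contrast, are essentially a matter of feeding the three guessing notions into the two layers of $\star$ and checking that the ``count announcement'' of the inner problem propagates to the outer one; the one point that could look delicate, namely running the outer realizer on a possibly-spurious $f$-instance, is already neutralized by working in completions, as in the proof of Proposition \ref{prop:guessabledownwards}.
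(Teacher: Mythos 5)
Your proposal is correct and follows essentially the same route as the paper: the same bookkeeping for the three positive claims, the same counterexample $\operatorname{Tmin}\leqW\lpo^*\star\lpo$ for (3), and a citation of Gill for (5). The only difference is that you spell out the diagonalization showing $\operatorname{Tmin}$ is not finitely guessable, which the paper merely gestures at via Proposition \ref{prop:delayableguessable}; your explicit argument is sound.
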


\begin{proposition}
\label{prop:eventuallyfinitelyguessablecn}
The following are equivalent for $f : \mathbf{X} \mto \mathbf{Y}$:
\begin{enumerate}
\item $f$ is eventually-finitely guessable.
\item There is some finitely guessable $g$ such that $f \leqW g \star \C_\mathbb{N}$
\end{enumerate}
\begin{proof}
To prove $1 \Rightarrow 2$, let $F$ be a computable realizer of $\breve{f}_{<\omega}$. Let $C : \subseteq \Baire \to \mathbb{N}$ be the map that returns how many guesses $F$ makes on a given name for an input for $f$. We have that $C \leqW \C_\mathbb{N}$. If we let $g : \subseteq \Baire \times \mathbb{N} \mto \mathbf{Y}$ be defined by $g(p,C(p)) = f(\delta_\mathbf{X}(p))$, then $g$ is easily seen to be finitely guessable -- we can use $F$ for guessing, and have the number of guesses made available as part of the input. We then have $f \leqW g \star C \leqW g \star \C_\mathbb{N}$.

The implication $2 \Rightarrow 1$ follows from Proposition \ref{prop:guessabilityandcomposition} (4) together with the observation that $\C_\mathbb{N}$ is eventually finitely guessable.
\end{proof}
\end{proposition}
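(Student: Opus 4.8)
The plan is to prove the two implications separately. The direction $2 \Rightarrow 1$ is the routine one, and I would first record that $\C_\mathbb{N}$ is itself eventually-finitely guessable: given a co-enumeration of the complement of a non-empty $A \subseteq \mathbb{N}$, output as a fresh guess the least number not yet seen in the complement each time that value changes; it is non-decreasing and bounded by $\min A$, hence changes only finitely often, and its final value $\min A$ lies in $A$. With this in hand, Proposition~\ref{prop:guessabilityandcomposition}(4), applied with the finitely guessable $g$ as the outer problem and $\C_\mathbb{N}$ as the inner one, shows $g \star \C_\mathbb{N}$ is eventually-finitely guessable, and Proposition~\ref{prop:guessabledownwards} then transfers this to any $f$ with $f \leqW g \star \C_\mathbb{N}$.

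For $1 \Rightarrow 2$, fix a computable realizer $F$ of $\breve{f}_{<\omega}$; on a name $p$ for some $x \in \mathbf{X}$ with $f(x)$ defined, $F$ emits a finite number $c(p)$ of guesses in $\overline{\mathbf{Y}}$, one of which lies in $f(x)$, but it never announces that it has stopped. Let $C : \subseteq \Baire \to \mathbb{N}$ be the problem whose correct outputs on $p$ are all $n \geq c(p)$. Then $C \leqW \C_\mathbb{N}$: the forbidden answers $\{0, \dots, c(p)-1\}$ can be enumerated from $p$, since observing $F$ begin its $(i+1)$-st guess rules out $i$, so their complement is a co-enumerated non-empty subset of $\mathbb{N}$ which we feed to $\C_\mathbb{N}$. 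Now define $g : \subseteq \Baire \times \mathbb{N} \mto \mathbf{Y}$ by $g(p, n) = f(\delta_\mathbf{X}(p))$ whenever $n \geq c(p)$. This $g$ is finitely guessable: on input $(p, n)$ we commit at once to producing a length-$n$ sequence in $\overline{\mathbf{Y}}^*$ whose $i$-th component is the $i$-th guess of $F(p)$ if $F$ ever reaches it and the (computable) name of $\bot$ otherwise; since $n \geq c(p)$, one of these components is a correct solution. Finally $f \leqW g \star C$ via essentially trivial reduction witnesses --- relay $p$ to $C$, relay $p$ together with $C$'s answer to $g$, relay $g$'s answer out --- and composing with $C \leqW \C_\mathbb{N}$ gives $f \leqW g \star \C_\mathbb{N}$.

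I expect the only delicate point, all of it inside $1 \Rightarrow 2$, to be that the exact number of guesses made by $F$ is not computable --- ``$n$ exceeds the true count'' is not a semidecidable condition --- so $C$ must be allowed to return any sufficiently large $n$, and $g$ must be designed to tolerate such over-estimates, which is precisely why $g$ pads with $\bot$ and why the completions $\overline{\mathbf{Y}}$ are used throughout. Everything else is bookkeeping about names, made harmless by the totality and computability of padding with $\bot$; one should also note that it is genuinely $\C_\mathbb{N}$, not $\tcn$, that appears here, since the eventually-finitely-guessable argument for it fails for the empty set.
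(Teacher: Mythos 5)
Your proof is correct and follows essentially the same route as the paper's: the paper likewise extracts the number of guesses of a realizer $F$ of $\breve{f}_{<\omega}$ as a problem $C \leqW \C_\mathbb{N}$, packages the rest into a finitely guessable $g(p,\cdot)$, and gets $2 \Rightarrow 1$ from Proposition~\ref{prop:guessabilityandcomposition}(4) plus the eventual-finite guessability of $\C_\mathbb{N}$. The only divergence is that you make $C$ multi-valued (any $n \geq c(p)$), whereas the paper takes the exact count $c(p)$; contrary to your closing remark this exact count is also reducible to $\C_\mathbb{N}$ (it is computable with finitely many mind changes, e.g.\ by enumerating, as non-answers, all pairs $\langle n,t\rangle$ for which $F$ has not made exactly $n$ guesses by stage $t$ or makes another guess later), so the over-estimate tolerance via $\bot$-padding is a harmless extra precaution rather than a necessity.
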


\paragraph*{Interaction with fractals}

\begin{proposition}
\label{prop:fractalguessable}
For a fractal $f$, the following are equivalent:
\begin{enumerate}
\item There exists some $k \in \mathbb{N}$ such that $f$ is $k$-guessable.
\item $f$ is finitely-guessable.
\end{enumerate}
\begin{proof}
We only need to prove the implication $2 \Rightarrow 1$. Let $F : \subseteq \Baire \mto \Baire$ witness that $f \equivW F$ is a fractal and let $p \in \dom(F)$. Then a realizer of $\breve{F}_*$ will output $k \in \mathbb{N}$ such that $\breve{F}_*(p) \in (\overline{\Baire})^k$ based on same prefix $w$ of $p$. It follows that $F|_{[w]}$ is $k$-guessable. By assumption we have $F|_{[w]} \equivW F \equivW f$, so $f$ is $k$-guessable.
\end{proof}
\end{proposition}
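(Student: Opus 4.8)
The plan is to establish the substantive direction $2 \Rightarrow 1$; the converse is immediate, since a $k$-guessable problem is in particular finitely guessable (as already noted before the proposition). So suppose the fractal $f$ is finitely guessable. Fix a witness $F : \subseteq \Baire \mto \Baire$ with $f \equivW F$ and $F|_{[w]} \equivW F$ for every $w \in \mathbb{N}^{<\omega}$ with $[w] \cap \dom(F) \neq \emptyset$. By Proposition \ref{prop:guessabledownwards}, finite guessability passes from $f$ down to $F$, so $\breve{F}_*$ admits a computable realizer $G$.

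The heart of the argument is the observation that $G$ must commit to the number of guesses at a finite stage. Indeed, for $p \in \dom(F)$ the value $\breve{F}_*(p)$ lies in $\overline{\Baire}^*$, a space of \emph{finite} tuples, so a name for it already encodes the length of the tuple as finite information; since $G$ is computable, hence continuous, it fixes the length of its output after reading only a finite prefix of the input. Pick any $p_0 \in \dom(F)$, and let $w \sqsubseteq p_0$ be a prefix long enough that $G$ has already determined the length $k$ of its output on $p_0$. Then $[w] \cap \dom(F)$ is non-empty (it contains $p_0$), and on every input from $[w] \cap \dom(F)$ the realizer $G$ outputs a tuple of length exactly $k$, with at least one component a correct answer. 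Hence $G$ restricted to $[w]$ witnesses that $F|_{[w]}$ is $k$-guessable.

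To finish, invoke the defining property of a fractal: $F|_{[w]} \equivW F \equivW f$, so in particular $f \leqW F|_{[w]}$. Since $k$-guessability is downward closed under Weihrauch reducibility (Proposition \ref{prop:guessabledownwards} again), $f$ is $k$-guessable, with the same $k$.

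The only delicate point is the claim that $G$ fixes the length of its output after a finite stage; this is precisely the feature that separates the codomain $\mathbf{Y}^*$ used for finite guessability from the codomain $\mathbf{Y}^{<\omega}$ used for eventually-finite guessability, and it is the single place where finiteness (rather than mere eventual finiteness) of the guessing is exploited. The remaining steps are just the standard transfer of guessability along Weihrauch reductions via Proposition \ref{prop:guessabledownwards} and the self-similarity $F|_{[w]} \equivW F$ built into the notion of a fractal.
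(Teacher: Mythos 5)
Your proof is correct and follows essentially the same route as the paper's: extract the finite stage at which the realizer of $\breve{F}_*$ commits to the number $k$ of guesses, restrict to the cylinder $[w]$ determined by that stage, and use the fractal property $F|_{[w]} \equivW F \equivW f$ together with downward closure of $k$-guessability. You merely spell out the continuity argument and the appeal to Proposition \ref{prop:guessabledownwards} more explicitly than the paper does.
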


Essentially the same idea as the preceding proposition also yields the following:
\begin{proposition}
\label{prop:delayableguessable}
The following are equivalent for a problem $f$:
\begin{enumerate}
\item There exists some $k \in \mathbb{N}$ such that $?f$ is $k$-guessable.
\item $?f$ is finitely-guessable.
\end{enumerate}
\begin{proof}
We only need to prove the implication $2 \Rightarrow 1$. If $?f$ is finitely guessable, the corresponding computation will have to specify how many guesses it needs to take on the unspecified input to $?f$. However, the unspecified input can then be specified to any input for $f$, which means that the same number of guesses always works.
\end{proof}
\end{proposition}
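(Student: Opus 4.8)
The plan is to prove only the non-trivial implication $2\Rightarrow 1$, the converse being the general remark that $k$-guessability entails finite guessability. I would imitate the proof of Proposition~\ref{prop:fractalguessable}, with a ``self-similarity in the uninformative direction'' of $?f$ playing the role there played by the fractal property. First I would recall the shape of a $?f$-instance: it is given by a name that for a while sits in a region carrying no information about an input, and that may or may not eventually begin to reveal a genuine name of some $q\in\dom(f)$; if it does, a correct answer is any element of $f(q)$, and if it never does, every answer is correct. In particular the totally uninformative name (say $0^\omega$) is an instance all of whose answers are valid, and prepending further uninformative steps to the front of any $?f$-name produces another $?f$-name with an unchanged domain-membership and answer set.

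Next, assuming $?f$ is finitely guessable, I would fix a computable $F$ realizing $\breve{(?f)}_*$ and run it on $0^\omega$. Its output is a tuple over $\overline{\Baire}$ of some length $k\in\mathbb{N}$, and since in the representation of $\overline{\Baire}^*$ used in the definition of finite guessability the length of a tuple is decided by a finite prefix of its name --- this is exactly what distinguishes finite from eventually-finite guessability --- there is an $n\in\mathbb{N}$ such that, having read only $0^n$, $F$ has already committed to a length-$k$ output. I would then let $(?f)|_n$ be the restriction of $?f$ to those instances whose name remains uninformative through the first $n$ steps: on any such instance $F$ reads $0^n$ first, so it still outputs a length-$k$ tuple one component of which solves the instance, and thus $F$ witnesses that $(?f)|_n$ is $k$-guessable.

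To finish, I would observe that $(?f)|_n \equivW {?f}$: the restriction reduces to $?f$ by the identity, and $?f$ reduces back by prepending $n$ uninformative steps to the given name, which by the recollection above does not affect domain-membership or the answer set; hence, by Proposition~\ref{prop:guessabledownwards} ($k$-guessability being downward closed under $\leqW$), $?f$ is $k$-guessable. The two points that must be got right --- and where the argument could break --- are the structural facts about $?f$ invoked here: that a finitely-guessing realizer really does commit to the number of guesses at a finite stage of reading its input, and that delaying a $?f$-instance is a harmless, computable operation; both are routine once the definition of $?f$ is unfolded, which is why this is, as advertised, essentially the argument of Proposition~\ref{prop:fractalguessable}.
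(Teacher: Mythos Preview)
Your proposal is correct and follows essentially the same approach as the paper's proof: run the finite-guessability realizer on the uninformative input until it commits to a tuple length $k$, then use that any genuine $?f$-instance can be delayed so as to agree with this prefix. The paper compresses this into two sentences (``the unspecified input can then be specified to any input for $f$''), whereas you spell out the mechanics via the restriction $(?f)|_n$, the equivalence $(?f)|_n \equivW {?f}$, and the appeal to Proposition~\ref{prop:guessabledownwards}; but the underlying idea is identical.
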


\begin{proposition}
\label{prop:closedfractalguessable}
For a closed fractal $f$, the following are equivalent:
\begin{enumerate}
\item There exists some $k \in \mathbb{N}$ such that $f$ is $k$-guessable.
\item $f$ is finitely-guessable.
\item $f$ is eventually finitely-guessable.
\end{enumerate}
\begin{proof}
In light of Proposition \ref{prop:fractalguessable}, we only need to prove the implication $(3) \Rightarrow (2)$. By Proposition \ref{prop:eventuallyfinitelyguessablecn}, if $f$ is eventually-finitely guessable, then there is some finitely-guessable $g$ with $f \leqW g \star \C_\mathbb{N}$. As $f$ is a closed fractal, this implies $f \leqW g$ by \cite[Theorem 2.4]{paulyleroux}. Thus, $f$ inherits being finitely guessable from $g$.
\end{proof}
\end{proposition}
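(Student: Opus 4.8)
The plan is to establish the cycle $(1) \Rightarrow (2) \Rightarrow (3) \Rightarrow (1)$. The implications $(1) \Rightarrow (2)$ and $(2) \Rightarrow (3)$ are already recorded (being $k$-guessable for a fixed $k$ implies being finitely guessable, which implies being eventually-finitely guessable), so the whole task reduces to $(3) \Rightarrow (1)$. I would not prove this directly but instead factor it through the intermediate statement $(2)$, i.e. prove $(3) \Rightarrow (2)$ and $(2) \Rightarrow (1)$ separately.

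For $(2) \Rightarrow (1)$ there is essentially nothing new to do: a closed fractal is in particular a fractal, so Proposition \ref{prop:fractalguessable} applies verbatim and gives that finite guessability of $f$ already forces $k$-guessability of $f$ for some $k \in \mathbb{N}$. (The mechanism there is that a realizer of $\breve{f}_*$ commits to a number of guesses $k$ after reading only a finite prefix $w$ of its input, whence $f|_{[w]}$ is $k$-guessable, and $f|_{[w]} \equivW f$ by the fractal property.)

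The actual content is $(3) \Rightarrow (2)$. Here I would first apply Proposition \ref{prop:eventuallyfinitelyguessablecn}: if $f$ is eventually-finitely guessable, then there is a finitely guessable $g$ with $f \leqW g \star \C_\mathbb{N}$. Now I would use that $f$ is a \emph{closed} fractal (not merely a fractal): for closed fractals, a $\C_\mathbb{N}$ appearing to the right of a compositional product can be absorbed, so $f \leqW g \star \C_\mathbb{N}$ already implies $f \leqW g$; this is \cite[Theorem 2.4]{paulyleroux}. Finally, since finite guessability is downwards closed under Weihrauch reducibility by Proposition \ref{prop:guessabledownwards}, the reduction $f \leqW g$ together with $g$ being finitely guessable gives that $f$ is finitely guessable, as desired.

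The only delicate point, and hence the place I would be most careful, is checking that the hypotheses of the $\C_\mathbb{N}$-absorption result from \cite{paulyleroux} are exactly met: one genuinely needs the closed-fractal assumption (the analogous statement fails for fractals in general, and indeed without some such assumption the equivalence $(3) \Leftrightarrow (2)$ breaks, as witnessed by $\Pi^0_2\C_\mathbb{N}$), and one needs the cited theorem in the form "$\C_\mathbb{N}$ to the right of $\star$ is removable for closed fractals". Beyond verifying that, the argument is pure bookkeeping with the propositions already in hand.
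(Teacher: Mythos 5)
Your argument is exactly the paper's: reduce everything to $(3) \Rightarrow (2)$ via Proposition \ref{prop:fractalguessable}, then use Proposition \ref{prop:eventuallyfinitelyguessablecn} to write $f \leqW g \star \C_\mathbb{N}$ with $g$ finitely guessable, absorb the $\C_\mathbb{N}$ using the closed-fractal theorem of \cite{paulyleroux}, and conclude by downward closure of finite guessability. The proof is correct and coincides with the one in the paper.
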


\section{Guessability and $\mathrm{TT}$}
\label{sec:guessabilitytt}
In this section we will show that $^1(\mathrm{TT}^1_2) \leqW \mathrm{RT}^1_+$, which gives a positive answer to \cite[Question 6.1]{damir-reed-manlio}. It follows that $^1(\mathrm{TT}^1_2)$ is finitely guessable, but we proceed to prove that it is not $k$-guessable for any $k \in \mathbb{N}$. As a corollary, we can conclude that $\mathrm{TT}^1_2$ is not eventually-finitely guessable (which is already known due to Gill \cite{gill-arxiv}). This furthermore has the corollary that $^1(\mathrm{TT}^1_2) \leW \mathrm{TT}^1_2$ (which was already known due to Dzhafarov, Solomon and Valenti \cite{damir-reed-manlio}), and that $^1(\mathrm{TT}^1_2)$ is not a fractal (and thus that fractality is not preserved by taking first-order parts).  We also show that $^1(\mathrm{TT}^1_3)$ is not finitely guessable, but it is eventually-finitely guessable by \cite[Theorem 5.7]{damir-reed-manlio}. Our results improve upon \cite[Theorems 4.4 \& 4.6]{damir-reed-manlio}, and use similar ideas in the proofs.

\begin{proposition}
$^1(\mathrm{TT}^1_2) \leqW \mathrm{RT}^1_+$.
\begin{proof}
We are given a $2$-colouring of the full binary tree together with a functional that returns some $n \in \mathbb{N}$ upon reading sufficiently long prefixes of monochromatic full subtrees. Upon finding the first subtree such that the functional returns some $n_0$, we can prepare the input to $\mathrm{RT}^1_+$. Let $v_1,v_2,\ldots,v_{k}$ be the leaves of the monochromatic subtree, and let $b \in \{0,1\}$ be its colour. We note that if the trees below the $v_i$ each have a monochromatic subtree coloured $b$, then $n_0$ is a correct output.

We create an instance $p_i$ of $\mathrm{RT}^1_2$ for each $i \leq k$. We write only $0$s to $p_i$ until we find a $(1-b)$-monochromatic subtree below $v_i$ which is sufficiently large to cause the functional to provide some answer $n_i$ on it. Let the leaves of this subtree $u^i_{j}$ for $j \leq \ell_i$. Now we run two processes in parallel, one for writing $0$s to $p_i$, and one for writing $1$s to $p_i$. If no process writes a digit, we just copy the last digit written. The process for $1$s goes through all vertices below $v_i$. Whenever it found a $(1-b)$-coloured vertex below the current one, it writes a $1$ and moves on to the next vertex. Thus, it will write infinitely many $1$s iff there is no vertex below $v_i$ below which all vertices are coloured $b$. The process for writing $0$s goes through $\ell_i$-tuples of vertices, one below each $u^i_j$ for $j \leq \ell_i$. If it finds a $b$-coloured vertex below one of the vertices in the tuple, it writes a $0$ and replaces that vertex with the new one below the appropriate $u^i_j$. Thus, it will write infinitely many $0$s iff it is not the case that below each $u^i_j$ there exists a vertex below which the colouring is constant $b$.

Now let us assume that one of the $k$-many $\mathrm{RT}^1_2$-instances answers $1$, say the $i$-th one. This means that below $v_i$ we can find a $(1-b)$-coloured tree $T_i$ causing the functional to answer $n_i$ (and in particular, we can find $n_i$). Furthermore, either the process writing $1$s has acted infinitely often, or both the $1$-process and the $0$-process acted only finitely many times. In the former case, we know that the colour $1-b$ appears densely below $v_i$, thus $T_i$ is extendible to a full monochromatic subtree, and $n_i$ is a correct answer to $^1(\mathrm{TT}^1_2)$. In the latter case, we know that below every leaf of $T_i$ there is a vertex below which the colouring is constant $1-b$, thus again $T_i$ is extendible to a full monochromatic subtree, and $n_i$ is a correct answer to $^1(\mathrm{TT}^1_2)$.

The remaining case is the one where all $k$ $\mathrm{RT}^1_2$-instances return $0$. We argue that the fact that the $i$-th $\mathrm{RT}^1_2$-instance returned $0$ means that there is a full $b$-monochromatic subtree below $v_i$, so overall, the first prefix we found is extendible, and thus $n_0$ is a correct answer to $^1(\mathrm{TT}^1_2)$. If we never started writing $1$s to $p_i$, then there is no $(1-b)$-monochromatic full subtree below $v_i$ at all, thus there has to be a $b$-coloured one by the truth of $\mathrm{TT}^1_2$. If the process to write $1$s started, but we wrote infinitely many $0$s afterwards, we know that below some $u^i_j$ the colour $b$ appears densely, which means that there will be a $b$-monochromatic full subtree below that $u^i_j$ (and hence below $v_i$). If the process to write $1$s started, and we write only finitely many $1$s afterwards, there is a vertex below $v_i$ below which $b$ is the only colour to appear at all. Again, the claim follows.
\end{proof}
\end{proposition}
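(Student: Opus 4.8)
The plan is to work with the concrete description of $^1(\TT_2)$: an instance consists of a $2$-colouring $c$ of the full binary tree $2^{<\omega}$ together with a continuous functional $\Phi$, carrying the promise that on a long enough prefix of every $c$-monochromatic full binary subtree $\Phi$ outputs a natural number, and that every such output is a correct first-order answer. First I would dovetail a search over all finite monochromatic ``prefixes'' $P$ --- finite subsets of $2^{<\omega}$ that form a full binary tree under the subtree relation and on which $c$ is constant --- feeding each to $\Phi$. By the truth of $\TT_2$ together with the promise on $\Phi$, this search eventually returns a first pair $(T,n_0)$, where $T$ has colour $b$ and leaves $v_1,\dots,v_k$. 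The governing observation is that $n_0$ is already a correct answer once $T$ extends to a full $b$-monochromatic subtree, which holds exactly when each $v_i$ has a full $b$-monochromatic subtree among its descendants; and when $v_i$ does \emph{not}, then by the truth of $\TT_2$ applied to the copy of the full binary tree hanging below $v_i$ there must instead be a full $(1-b)$-monochromatic subtree below $v_i$, a long enough prefix $T_i$ of which forces $\Phi$ to output a (then correct) answer $n_i$.

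Next, for each leaf $v_i$ I would build an instance $p_i$ of $\RT^1_2$ whose pigeonhole answer reveals which colour ``wins'' below $v_i$. While dovetailing a search below $v_i$ for a $(1-b)$-monochromatic prefix $T_i$ (with leaves $u^i_1,\dots,u^i_{\ell_i}$) on which $\Phi$ answers some $n_i$, write only $0$s to $p_i$; if no such $T_i$ is ever found, keep writing $0$s, which is safe because a full $(1-b)$-subtree below $v_i$ would itself have forced $\Phi$ to answer, so there is none, and hence by the truth of $\TT_2$ there is a full $b$-subtree below $v_i$. Once $T_i$ is found, run two competing processes that append to $p_i$ (repeating the last symbol whenever neither acts): a ``$1$-process'' that walks through the vertices below $v_i$ one at a time, emitting a $1$ and advancing each time it certifies that the current vertex has a $(1-b)$-coloured descendant; and a ``$0$-process'' that walks through the $\ell_i$-tuples of descendants of the $u^i_j$ one at a time, emitting a $0$ and advancing each time it certifies that some entry of the current tuple has a $b$-coloured descendant. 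Then infinitely many $1$s means no vertex below $v_i$ has all of its descendants coloured $b$, so $T_i$ extends to a full $(1-b)$-subtree; infinitely many $0$s means below some $u^i_j$ the colour $b$ appears densely, giving a full $b$-subtree below $v_i$; and if both processes stall, the $1$-process has exhibited a vertex below $v_i$ below which the colouring is constantly $b$ (giving a full $b$-subtree below $v_i$), while the $0$-process has exhibited, below each $u^i_j$, a vertex below which the colouring is constantly $1-b$ (so $T_i$ extends to a full $(1-b)$-subtree, routing both subtree-children of each $u^i_j$ through that vertex). Tracing these cases shows that $p_i$ answering $0$ guarantees a full $b$-subtree below $v_i$, while $p_i$ answering $1$ guarantees that $n_i$ is defined and correct.

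Finally, I would feed $p_1,\dots,p_k$ to $\RT^1_+$ as a single $2^k$-colouring $n \mapsto (p_1(n),\dots,p_k(n))$ --- legitimate because the number $k$ of leaves of $T$, and hence the colour bound $2^k$, is fixed after a finite stage of the computation, so the reduction may simply delay announcing it. From a colour with infinite preimage we read off, for each $i$, a valid $\RT^1_2$-answer $b_i$ for $p_i$; if some $b_i = 1$ we output the corresponding $n_i$, and if all $b_i = 0$ then every $v_i$ has a full $b$-subtree below it, so $T$ extends to a full $b$-monochromatic subtree and we output $n_0$. In every case the output is a correct first-order answer.

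I expect the main obstacle to be the design and verification of the instances $p_i$: one must arrange that the pigeonhole answer of $p_i$ is informative no matter how the two processes interleave --- in particular in the degenerate cases where $\Phi$ never answers on the ``wrong-colour'' subtrees below $v_i$, and where both processes only act finitely often --- while simultaneously ensuring that the conjunction of all $k$ answers pins down whether the initial prefix $T$ extends. Matching the ``one vertex / one tuple at a time'' bookkeeping to the greedy tree-building argument, and using the point that a monochromatic prefix extends as soon as each leaf merely has \emph{some} descendant below which the colouring is constant of the right colour, are the delicate parts; the remaining items --- termination of the search for $T$, discovering $k$ at runtime within a Weihrauch reduction, and the reduction of $k$-fold $\RT^1_2$ to $\RT^1_+$ --- are routine.
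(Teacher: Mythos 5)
Your proposal is correct and follows essentially the same route as the paper's proof: locate the first monochromatic prefix on which the functional answers $n_0$, build one $\mathrm{RT}^1_2$-instance per leaf $v_i$ using the same two competing $0$/$1$-writing processes, and perform the same case analysis (answer $1$ certifies extendibility of the $(1-b)$-prefix $T_i$, all answers $0$ certify extendibility of the original prefix). The only additions are the explicit packaging of the $k$ instances into a single $\mathrm{RT}^1_+$ query and the remark that $k$ may be announced after a finite delay, both of which the paper leaves implicit.
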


\begin{corollary}
$^1(\mathrm{TT}^1_2)$ is finitely-guessable.
\end{corollary}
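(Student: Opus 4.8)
The plan is to combine the preceding proposition, $^1(\mathrm{TT}^1_2) \leqW \mathrm{RT}^1_+$, with the downward closure of finite guessability under Weihrauch reducibility (Proposition \ref{prop:guessabledownwards}). By that closure property it suffices to check that $\mathrm{RT}^1_+$ itself is finitely guessable.

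To verify this, recall that an instance of $\mathrm{RT}^1_+$ is a pair consisting of a number $k$ and a colouring $c : \mathbb{N} \to \mathbf{k}$, and that a correct answer is any $i \in \mathbf{k}$ with $c^{-1}(i)$ infinite. A computable realizer of $\breve{(\mathrm{RT}^1_+)}_*$ simply reads $k$ off the input and then outputs the tuple $(0,1,\ldots,k-1)$, regarded as an element of $\overline{\mathbb{N}}^*$. Since $\mathbb{N}$ is partitioned into the $k$ classes $c^{-1}(0),\ldots,c^{-1}(k-1)$, at least one of them is infinite, so at least one of these $k$ guesses is a correct answer. Hence $\mathrm{RT}^1_+$ is finitely guessable; indeed, once $k$ has been read it is even $k$-guessable.

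Applying Proposition \ref{prop:guessabledownwards} to the reduction $^1(\mathrm{TT}^1_2) \leqW \mathrm{RT}^1_+$ then yields the corollary. There is no substantial obstacle here; the only point worth noting is that although the number of colours $k$ is not fixed in advance, it is supplied as part of the input, so the number of guesses is determined after finitely much of the input has been inspected — precisely what the definition of finite guessability permits.
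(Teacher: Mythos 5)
Your proof is correct and matches the paper's intended (implicit) argument: the corollary is drawn directly from the reduction $^1(\mathrm{TT}^1_2) \leqW \mathrm{RT}^1_+$ together with Proposition \ref{prop:guessabledownwards}, and your verification that $\mathrm{RT}^1_+$ is finitely guessable (read $k$ from the input, then guess all $k$ colours) is exactly the right observation. Nothing is missing.
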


We want to show that  $^1(\TT_2)$ is not $k$-guessable for any $k \in \mathbb{N}$. We will do this by providing a suitable lower bound for $^1(\TT_2)$, namely a kind of cofinite choice principle:
\begin{definition}
Let $\ecfc$ be the problem whose input are pairs $k \in \mathbb{N}$, $A \in \mathcal{A}(\mathbb{N})$ with $|\mathbb{N} \setminus A| \leq k$, and whose solutions are any $n \in A$.
\end{definition}

We can easily see that $\accn^* \leqW \ecfc$, as given $k$ instances of $\accn$ there are at most $k$ numbers that do not answer all of them correctly at once. Thus, we obtain a single $\ecfc$ instance whose answer we can return to all $k$ $\accn$ instances.

\begin{proposition}
\label{prop:ecfcleqwtt2}
$\ecfc \leqW \TT_2$.
\begin{proof}
We are given $k \in \mathbb{N}$ and some $A \in \mathcal{A}(\mathbb{N})$ with $|\mathbb{N} \setminus A| \leq k$. For the outer reduction witness, we pick an injective enumeration $(s_n)_{n \in \mathbb{N}}$ of the $k+1$-element antichains in $2^{<\omega}$. If we find that $s_n$ appears in a monochromatic subtree, we return $n$ as solution for $\ecfc$. This means that we may have to prevent up to $k$ antichains of size $k+1$ each to be part of a monochromatic copy of $2^{<\omega}$.

Our colouring $c : 2^{<\omega} \to \{0,1\}$ initially is constant $0$. If $n_0$ gets enumerated outside of $A$, we pick an element of $s_{n_0}$ and make every future descendent of it coloured $1$, elsewhere the colouring remains constant $0$. If another number $n_1$ gets enumerated outside of $A$, we reevaluate the colouring choices. We pick an element $t_0$ of $s_{n_0}$ and an element $t_1$ of $s_{n_1}$ such that $t_0$ is incomparable with some element of $s_{n_1}$ other than $t_1$, and $t_1$ is incomparable with an element of $s_{n_0}$ other than $t_0$. For future colours, every descendent of $t_0$ or $t_1$ gets coloured $1$, every other vertex gets coloured $0$. We proceed in this way for any future element that gets removed from $A$. As this can happen at most $k$ times, we can always find elements of the antichains incomparable with an unchosen vertex in each other antichain.

After every element that is going to be removed from $A$ has been removed, the colouring is locally constant. Moreover, any antichain corresponding to a wrong answer has at least one vertex below which the colouring is eventually constant $1$ and another below which the colouring is eventually constant $1$. This prevents any such antichain from appearing in a monochromatic copy of $2^{<\omega}$, i.e.~the reduction works correctly.
\end{proof}
\end{proposition}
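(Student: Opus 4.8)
The plan is to reduce $\ecfc$ to $\TT_2$ directly, using the structure of $\TT_2$ that lets us ask for a monochromatic copy of $2^{<\omega}$ inside a $2$-colouring we build adaptively. The key idea is that from an $\ecfc$-instance $(k, A)$ with $|\mathbb{N}\setminus A|\le k$ we only ever need to rule out finitely many candidate answers—at most $k$ of them—so we should arrange the colouring so that each "bad" answer corresponds to an obstruction that provably cannot sit inside a monochromatic copy of $2^{<\omega}$. Concretely, I would fix an injective enumeration $(s_n)_{n\in\mathbb N}$ of the $(k+1)$-element antichains in $2^{<\omega}$ for the outer reduction witness: if the monochromatic subtree returned by $\TT_2$ is found to contain the antichain $s_n$, we output $n$ as our solution to $\ecfc$. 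So the colouring must be designed so that $s_n$ appears in a monochromatic copy exactly when $n\in A$ is safe to output.

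The construction of $c:2^{<\omega}\to\{0,1\}$ proceeds in stages driven by the enumeration of $\mathbb{N}\setminus A$. Start with $c\equiv 0$. Each time a new number $n_j$ is enumerated out of $A$, we reselect, for each $n_0,\dots,n_j$ removed so far, a single vertex $t_i$ from the antichain $s_{n_i}$, with the crucial requirement that each chosen $t_i$ is incomparable with at least one \emph{unchosen} vertex of every other $s_{n_{i'}}$; then we declare every descendant of any chosen $t_i$ to be coloured $1$ from now on, and everything else $0$. Because at most $k$ numbers will ever be removed and each antichain has $k+1$ elements, the pigeonhole guarantee gives us enough room to keep making these incomparability-respecting choices. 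The point of the "incomparable with an unchosen vertex" condition is that for a wrong answer $n$, the antichain $s_n$ ends up with (at least) two vertices, one eventually all-$1$ below it and one eventually all-$0$ below it (or more precisely: vertices forcing both colours), which is incompatible with being contained in any monochromatic copy of $2^{<\omega}$, since such a copy is an isomorphic embedding and an antichain in it must have all descendants within the copy of one colour.

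The main obstacle is verifying correctness of the two directions cleanly. For soundness: after the last removal from $A$, the colouring stabilizes to something locally constant, and I must check that (i) every antichain $s_n$ with $n\notin A$ is genuinely blocked—this is where the incomparable-unchosen-vertex bookkeeping pays off, guaranteeing $s_n$ straddles the $0$-region and the $1$-region—and (ii) the antichain $s_{n^*}$ that $\TT_2$'s returned copy actually contains has $n^*\in A$, so that our output is valid. For the latter, since a monochromatic copy of $2^{<\omega}$ exists (by totality of $\TT_2$ on our instance) and no blocked antichain can lie inside it, any antichain it does contain corresponds to a good answer. The delicate point is making the reselection rule explicit enough that one can see the incomparability constraints are always simultaneously satisfiable with $\le k$ removals and $(k{+}1)$-element antichains; a short combinatorial lemma (greedily choosing $t_i$ avoiding the $\le k-1$ already-chosen vertices of the other antichains, which leaves at least two candidates, and then noting two candidates suffice to also be incomparable with an unchosen vertex elsewhere) handles this. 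The rest is routine.
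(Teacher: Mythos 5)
Your proposal is correct and follows essentially the same approach as the paper's proof: the same outer reduction witness via an injective enumeration of $(k+1)$-element antichains, the same adaptive recolouring that reselects one vertex per blocked antichain subject to the incomparability-with-an-unchosen-vertex condition, and the same correctness argument that a wrong antichain must straddle both colour regions and hence cannot embed into a monochromatic copy of $2^{<\omega}$. Your added remark on the combinatorial satisfiability of the reselection step is, if anything, slightly more explicit than the paper's.
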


\begin{corollary}
\label{corr:tt12firstordernotkguessable}
For no $k \in \mathbb{N}$ does it hold that $^1(\TT_2)$ is $k$-guessable.
\begin{proof}
Proposition \ref{prop:ecfcleqwtt2} in particular shows that $\accn^k \leqW\ ^1(\TT_2)$ for all $k \in \mathbb{N}$. By Proposition \ref{prop:accnknotkguessable}, this shows that  $^1(\TT_2)$ is not $k$-guessable.
\end{proof}
\end{corollary}

\begin{corollary}
\label{corr:rt1knleqttk}
$\TT_2 \nleqW \RT^1_k$ for all $k \in \mathbb{N}$.
\end{corollary}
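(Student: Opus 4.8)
The plan is a short argument by contradiction, using $k$-guessability as a separating invariant. The key observation to record first is that $\RT^1_k$ is $k$-guessable: its codomain is $\mathbf{k}$, every valid instance has at least one solution (by the pigeonhole principle), and so the computable constant map sending every input to the tuple $(0,1,\dots,k-1) \in \overline{\mathbf{k}}^k$ is a realizer of $\breve{(\RT^1_k)}_k$. (Equivalently, this is an instance of the characterization of $k$-guessability via maps into $\mathbf{k}$, applied to $\RT^1_k \leqW \RT^1_k$.)

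Next I would assume for contradiction that $\TT_2 \leqW \RT^1_k$. Since being $k$-guessable is downward closed under Weihrauch reducibility (Proposition \ref{prop:guessabledownwards}), it would follow that $\TT_2$ is $k$-guessable. But we have the chain $\accn^k \leqW \accn^* \leqW \ecfc \leqW \TT_2$, where the first step is trivial, the second is the observation made immediately before Proposition \ref{prop:ecfcleqwtt2}, and the third is Proposition \ref{prop:ecfcleqwtt2} itself. Applying Proposition \ref{prop:guessabledownwards} once more, $\accn^k$ would be $k$-guessable, contradicting Proposition \ref{prop:accnknotkguessable}. (Alternatively one can route through Corollary \ref{corr:tt12firstordernotkguessable} and the fact that $^1(\TT_2) \leqW \TT_2$, but going directly through $\accn^k$ is cleanest.)

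I do not anticipate a genuine obstacle here, since every ingredient has already been established in the excerpt; the only point requiring a moment's care is matching the parameter on the two sides, i.e.\ making sure the $k$ for which $\accn^k$ is used is the same $k$ as in $\RT^1_k$ — and this is fine because Proposition \ref{prop:ecfcleqwtt2} yields $\accn^k \leqW \TT_2$ uniformly for every $k \in \mathbb{N}$. The conceptual content is entirely in the prior results: $k$-guessability is a downward-closed property enjoyed by $\RT^1_k$ but provably failed by $\accn^k \leqW \TT_2$.
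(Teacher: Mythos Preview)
Your proof is correct and follows essentially the same approach the paper intends: the corollary is stated immediately after Corollary~\ref{corr:tt12firstordernotkguessable} with no separate proof, and the implicit argument is precisely that $\RT^1_k$ is $k$-guessable while (by the preceding corollary, which itself unwinds to $\accn^k \leqW\ ^1(\TT_2)$) nothing above $\TT_2$ can be. Your direct route through $\accn^k \leqW \ecfc \leqW \TT_2$ simply bypasses the detour through $^1(\TT_2)$, but the content is identical.
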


\begin{corollary}
$\mathrm{TT}^1_2$ is not eventually-finitely guessable.
\begin{proof}
As $\mathrm{TT}^1_2$ is a closed fractal, if it were eventually-finitely guessable, it would already be $k$-guessable for some $k \in \mathbb{N}$ by Proposition \ref{prop:closedfractalguessable}. This would then be inherited by $^1(\mathrm{TT}^1_2)$ by Proposition \ref{prop:guessabledownwards}, contradicting Corollary \ref{corr:tt12firstordernotkguessable}.
\end{proof}
\end{corollary}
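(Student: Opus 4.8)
The plan is to derive a contradiction from the assumed eventual-finite guessability of $\TT_2$ by pushing that property down to $^1(\TT_2)$, where it is already known to fail. First I would recall why $\TT_2$ is a closed fractal: an instance is coded by a $2$-colouring of $2^{<\omega}$, and fixing a prefix $w$ of such a name determines the colour of only finitely many vertices, after which any colouring of the remaining tree is still admissible, so $\TT_2|_{[w]} \equivW \TT_2$; moreover the set of valid solutions — the monochromatic copies of $2^{<\omega}$ — is closed, so $\TT_2$ admits a representative that is a closed-valued fractal. (This is already observed in \cite{damir-reed-manlio,gill-arxiv}.)

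With that in hand the argument is short. Assuming $\TT_2$ is eventually-finitely guessable, Proposition \ref{prop:closedfractalguessable} (the implication $(3)\Rightarrow(1)$) yields some $k \in \mathbb{N}$ for which $\TT_2$ is $k$-guessable. Since $^1(\TT_2) \leqW \TT_2$ by the very definition of the first-order part, Proposition \ref{prop:guessabledownwards} transfers $k$-guessability from $\TT_2$ to $^1(\TT_2)$. But Corollary \ref{corr:tt12firstordernotkguessable} asserts precisely that $^1(\TT_2)$ is not $k$-guessable for any $k \in \mathbb{N}$ (via the chain $\accn^k \leqW\ ^1(\TT_2)$ together with Proposition \ref{prop:accnknotkguessable}). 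This contradiction finishes the proof.

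Essentially every step is an invocation of a result already established, so there is no substantial obstacle; the only point requiring care is the (standard) verification that $\TT_2$ is a \emph{closed} fractal, since Proposition \ref{prop:closedfractalguessable} — unlike Proposition \ref{prop:fractalguessable} — genuinely uses closedness (through \cite[Theorem 2.4]{paulyleroux}) to collapse eventual-finite guessability down to finite, hence $k$-, guessability. I would therefore state that fact explicitly, with a pointer to the earlier literature, before applying the proposition.
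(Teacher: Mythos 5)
Your argument is identical to the paper's: closed fractality of $\mathrm{TT}^1_2$ plus Proposition \ref{prop:closedfractalguessable} would yield $k$-guessability, which Proposition \ref{prop:guessabledownwards} pushes down to $^1(\mathrm{TT}^1_2)$, contradicting Corollary \ref{corr:tt12firstordernotkguessable}. Your extra remarks justifying that $\mathrm{TT}^1_2$ is a closed fractal are a reasonable addition but do not change the route.
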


\begin{corollary}
$^1(\mathrm{TT}^1_2)$ is not $\sigma$-join-irreducible (and in particular, not a fractal).
\end{corollary}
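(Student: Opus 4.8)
The plan is to exhibit $^1(\mathrm{TT}^1_2)$ as a countable coproduct $\bigsqcup_{k} f_k$ of problems, none of which is Weihrauch equivalent to $^1(\mathrm{TT}^1_2)$ itself; by definition this is exactly the failure of $\sigma$-join-irreducibility.

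For the decomposition I would use that $^1(\mathrm{TT}^1_2) \leqW \mathrm{RT}^1_+$ (the Proposition proved above) together with the obvious equivalence $\mathrm{RT}^1_+ \equivW \bigsqcup_k \mathrm{RT}^1_k$. Fix an inner reduction witness $H$ for $^1(\mathrm{TT}^1_2) \leqW \bigsqcup_k \mathrm{RT}^1_k$. Since the domain of $\bigsqcup_k \mathrm{RT}^1_k$ is a coproduct, $H$ commits on each input, after finitely many steps, to the index $k$ of the summand it maps into, so that $k$ is computable from the input. Let $f_k$ be the restriction of $^1(\mathrm{TT}^1_2)$ to the set of instances on which $H$ selects the $k$-th summand. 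These sets partition the instances of $^1(\mathrm{TT}^1_2)$; by construction $f_k \leqW \mathrm{RT}^1_k$; and $^1(\mathrm{TT}^1_2) \equivW \bigsqcup_k f_k$, since the reduction from left to right runs $H$ to learn $k$ and then passes the original input verbatim to the $f_k$-summand, while the reverse reduction ignores the index and solves $^1(\mathrm{TT}^1_2)$ directly. As $\mathrm{RT}^1_k$ is $k$-guessable in the cheapest possible way — the constant guess $(0,1,\ldots,k-1)$ is always legitimate by the pigeonhole principle — Proposition \ref{prop:guessabledownwards} makes each $f_k$ $k$-guessable.

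It will then remain to check that no $f_k$ is equivalent to $^1(\mathrm{TT}^1_2)$. If we had $^1(\mathrm{TT}^1_2) \leqW f_k$, then $^1(\mathrm{TT}^1_2)$ would be $k$-guessable by Proposition \ref{prop:guessabledownwards}, contradicting Corollary \ref{corr:tt12firstordernotkguessable}. Hence $^1(\mathrm{TT}^1_2) \nleqW f_k$, and in particular $^1(\mathrm{TT}^1_2) \not\equivW f_k$, for every $k$, so $^1(\mathrm{TT}^1_2)$ is not $\sigma$-join-irreducible. Finally, since every fractal is $\sigma$-join-irreducible — a reduction out of a fractal $F$ into a coproduct $\bigsqcup_n g_n$ fixes the summand index after reading a finite prefix $w$ of its input, so it factors through one summand, yielding $F|_{[w]} \leqW g_n$ and hence $F \equivW F|_{[w]} \leqW g_n$ — it follows that $^1(\mathrm{TT}^1_2)$ is not a fractal.

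The single point that needs care is the claim $^1(\mathrm{TT}^1_2) \leqW \bigsqcup_k f_k$: one must be certain that the summand index selected by the auxiliary reduction is genuinely available after finitely many computation steps (which is guaranteed by the coproduct structure of the domain of $\bigsqcup_k \mathrm{RT}^1_k$) and that re-feeding the original input to the chosen $f_k$ is legitimate with respect to the inherited representations. This is precisely the bookkeeping that underlies the $\sigma$-join-irreducibility of fractals, and it is routine; everything else is a formal consequence of the finite guessability of $^1(\mathrm{TT}^1_2)$ (the Corollary above) together with the failure of $k$-guessability (Corollary \ref{corr:tt12firstordernotkguessable}).
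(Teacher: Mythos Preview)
Your proof is correct and matches the argument the paper leaves implicit: combine $^1(\mathrm{TT}^1_2) \leqW \mathrm{RT}^1_+ \equivW \bigsqcup_k \mathrm{RT}^1_k$ with the fact that each $\mathrm{RT}^1_k$ is $k$-guessable while $^1(\mathrm{TT}^1_2)$ is not (Corollary~\ref{corr:tt12firstordernotkguessable}). One remark: if one takes $\sigma$-join-irreducible in the ``prime'' sense ($f \leqW \bigsqcup_n g_n \Rightarrow \exists n\, f \leqW g_n$), then the $g_k = \mathrm{RT}^1_k$ already witness the failure and your intermediate construction of the restrictions $f_k$ is unnecessary; your version additionally covers the stricter reading requiring a decomposition $f \equivW \bigsqcup_k f_k$, which does no harm.
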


The following lemma makes precise an argument by Dzhafarov, Solomon and Valenti \cite{damir-reed-manlio}:
\begin{lemma}
\label{lemma:tt12tt13delayable}
$?(^1(\mathrm{TT}^1_2)) \leqW\ ^1(\mathrm{TT}^1_3)$
\begin{proof}
As long as the $?(^1(\mathrm{TT}^1_2))$ is unspecified, we let the input to $^1(\mathrm{TT}^1_3)$ be a tree which is $2$-monochromatic, together with the functional that returns $0$ upon seeing a single $2$-coloured vertex. If the input to $?(^1(\mathrm{TT}^1_2))$ ever gets specified, we make the remainder of the colouring fed to $^1(\mathrm{TT}^1_3)$ agree with the colouring for $^1(\mathrm{TT}^1_2)$ -- which means that there cannot be a $2$-coloured full subtree anymore -- and let the functional return $n+1$ on any $0$ or $1$ monochromatic subtree whenever the functional received as input would return $n$. We then know that if we receive $0$ as answer from $^1(\mathrm{TT}^1_3)$, the input to $?(^1(\mathrm{TT}^1_2))$ was never specified, and if we receive $n+1$ from $^1(\mathrm{TT}^1_3)$, we should answer $n$ to $?(^1(\mathrm{TT}^1_2))$.
\end{proof}
\end{lemma}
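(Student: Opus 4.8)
The plan is to use the extra colour as a flag for ``the delayed instance has not yet been revealed'' and to shift the answers of the simulated $^1(\TT_2)$-computation up by one, so that the answer $0$ becomes reserved for that flag. Recall that an instance of $^1(\TT_k)$ consists of a colouring $c:2^{<\omega}\to\mathbf{k}$ together with a functional $\Phi$ which, reading sufficiently long prefixes of any monochromatic full subtree of $c$, outputs a natural number; the solutions are exactly the numbers so produced. An instance of $?(^1(\TT_2))$ is such a pair $(c,\Phi)$ revealed with delay: at each stage either nothing has been committed, or an ever longer initial segment of $(c,\Phi)$ has been, and if the pair is never fully committed then every output counts as correct.

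For the inner reduction witness I would build a single $^1(\TT_3)$-instance $(\hat c,\hat\Phi)$ on colours $0,1,2$ as follows. While nothing has been committed, colour every vertex encountered so far with $2$ and let $\hat\Phi$ output $0$ as soon as it reads a $2$-coloured vertex. If the $?(^1(\TT_2))$-instance ever commits to a genuine pair $(c,\Phi)$, colour every not-yet-coloured vertex by its $c$-value (so from that point on only colours $0$ and $1$ are used), and let $\hat\Phi$ output $n+1$ on a $\{0,1\}$-monochromatic prefix whenever the given $\Phi$ would output $n$ on it. All of this is computable from the $?(^1(\TT_2))$-name. The outer reduction witness will be: on an answer $m$ returned by $^1(\TT_3)$, output anything if $m=0$, and output $n$ if $m=n+1$.

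The correctness analysis rests on two points. If the delayed instance is never fully revealed, then $\hat c\equiv 2$, the whole tree is a $2$-monochromatic full subtree, $\hat\Phi$ returns $0$ on it, so $0$ is a solution and our arbitrary output is vacuously correct. If the delayed instance is revealed, then only finitely many vertices ever carry colour $2$, so $\hat c$ has \emph{no} $2$-monochromatic full subtree; its monochromatic full subtrees are precisely the monochromatic full subtrees of $c$ that avoid those finitely many vertices (every monochromatic full subtree of $c$ has such a sub-subtree, so $\hat\Phi$ is a genuine functional), and on each of them $\hat\Phi$ outputs a value of the form $n+1$ with $n$ an actual output of $\Phi$ on a monochromatic full subtree of $c$. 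In particular $0$ is never a solution of $(\hat c,\hat\Phi)$ once the instance has been revealed, so $m=0$ can only arise in the unrevealed case, and $m=n+1$ gives us a legitimate solution $n$ of $^1(\TT_2)(c,\Phi)$.

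I do not expect a serious obstacle here; the one thing to get right is that $\hat\Phi$ stays a well-defined continuous functional across the colour switch. A $2$-coloured prefix on which $\hat\Phi$ has already committed the answer $0$ simply stops being a prefix of any monochromatic full subtree the moment the switch happens, so its now-unused value causes no harm. The crux is the clean dichotomy on the meaning of the returned number: it is $0$ if and only if the delayed instance was never specified, and $n+1$ otherwise, with $n$ directly decodable into an answer for $?(^1(\TT_2))$.
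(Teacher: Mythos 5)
Your proposal is correct and follows essentially the same route as the paper: reserve the third colour as a flag for the unspecified case with the functional answering $0$ on it, and shift the simulated functional's outputs by one once the delayed instance is revealed. The extra care you take about $\hat\Phi$ remaining a well-defined functional and about full monochromatic subtrees of $c$ having sub-copies avoiding the finitely many $2$-coloured vertices is a correct elaboration of what the paper leaves implicit.
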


\begin{corollary}
$^1(\mathrm{TT}^1_3)$ is not finitely-guessable.
\begin{proof}
By Lemma \ref{lemma:tt12tt13delayable}, if $^1(\mathrm{TT}^1_3)$ were finitely-guessable, then the same would hold for $?(^1(\mathrm{TT}^1_2))$. By Proposition \ref{prop:delayableguessable}, then there has to be some $k$ such that $?(^1(\mathrm{TT}^1_2))$ and thus $^1(\mathrm{TT}^1_2)$ is $k$-guessable. This contradicts Corollary \ref{corr:tt12firstordernotkguessable}.
\end{proof}
\end{corollary}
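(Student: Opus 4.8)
The plan is a short argument by contradiction that simply concatenates machinery already in place. Assume $^1(\TT_3)$ is finitely-guessable. The first step is to push this property downwards along Lemma \ref{lemma:tt12tt13delayable}: that lemma supplies $?(^1(\TT_2)) \leqW\ ^1(\TT_3)$, and since finite guessability is closed downwards under Weihrauch reducibility (Proposition \ref{prop:guessabledownwards}), we conclude that $?(^1(\TT_2))$ is finitely-guessable.

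The second step is to collapse ``finitely-guessable'' down to ``$k$-guessable'' by exploiting that we are now dealing with a delayed problem. Applying Proposition \ref{prop:delayableguessable} with $f = {}^1(\TT_2)$, the finite guessability of $?f$ yields a fixed $k \in \mathbb{N}$ for which $?f$ is $k$-guessable. Since $^1(\TT_2) \leqW\ ?(^1(\TT_2))$ holds trivially — a specified input is a special case of a possibly-unspecified one — a further application of Proposition \ref{prop:guessabledownwards} transfers $k$-guessability to $^1(\TT_2)$ itself. This contradicts Corollary \ref{corr:tt12firstordernotkguessable}, which asserts that $^1(\TT_2)$ is $k$-guessable for no $k$, and the corollary follows.

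I do not anticipate a genuine obstacle: all of the real content has been expended upstream, in Lemma \ref{lemma:tt12tt13delayable} (the delayable embedding of $^1(\TT_2)$ into $^1(\TT_3)$, which makes precise the Dzhafarov--Solomon--Valenti argument) and in Corollary \ref{corr:tt12firstordernotkguessable} (which rests on the lower bound $\accn^k \leqW\ ^1(\TT_2)$ from Proposition \ref{prop:ecfcleqwtt2} together with the non-$k$-guessability of $\accn^k$ in Proposition \ref{prop:accnknotkguessable}). The only points that need to be checked, both immediate from the definitions, are that the delayability operator $?(\cdot)$ meets the hypotheses of Propositions \ref{prop:guessabledownwards} and \ref{prop:delayableguessable}, and that $^1(\TT_2) \leqW\ ?(^1(\TT_2))$.
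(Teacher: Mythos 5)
Your proposal is correct and follows the paper's proof step for step: push finite guessability down along Lemma \ref{lemma:tt12tt13delayable}, collapse to $k$-guessability via Proposition \ref{prop:delayableguessable}, and contradict Corollary \ref{corr:tt12firstordernotkguessable}. You merely make explicit two steps the paper leaves implicit (the appeal to Proposition \ref{prop:guessabledownwards} and the trivial reduction $^1(\TT_2) \leqW\ ?(^1(\TT_2))$), which is fine.
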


\section{The finitary part of $\TT$}
The $k$-finitary part of a principle, denoted by $\operatorname{Fin}_k(f)$, is the greatest Weihrauch degree below $f$ having a representative with codomain $\mathbf{k}$ \cite{paulycipriani1}. We abbreviate $\operatorname{Fin}(f) = \bigsqcup_k \operatorname{Fin}_k(f)$. 
This means that $\Fin(f)$ characterizes which problems with finite codomain can be solved by $f$. In \cite{paulycipriani1}, this notion was introduced in order to prove separations between Weihrauch degrees by reasoning in a simpler, more restrictive setting. However, the finitary part of a problem can also be used to show similarities between problems. For example, in  \cite{gohpaulyvalenti2-cie} it is shown that the problem of finding a descending sequence in an ill-founded linear order and the problem of finding a bad sequence in a non-wellorder have the same finitary part, although their first-order parts differ. Gill already proposed to study the finitary part of $\TT_k$, which we characterize in this section. On the one hand, our result lets us answer a question by Dzhafarov, Solomon and Valenti in the negative by proving $\C_3 \nleqW \TT_2$; on the other hand we will see that $\TT_k$ and $\RT^1_k$ have the same finitary part; and thus the additional strength of $\TT_k$ over $\RT^1_k$ only materializes when we consider problems with a more complicated codomain.

\begin{proposition}
\label{prop:fintcn}
$\operatorname{Fin}_k(\mathrm{TC}^*_\mathbb{N}) \equivW \mathrm{RT}^1_k$
\begin{proof}
Since $\mathrm{RT}^1_{k+1} \leqW (\mathrm{RT}^1_2)^k \leqW \mathrm{TC}^k_\mathbb{N}$, the right-to-left reduction is clear. For the left-to-right reduction, assume that $ f: \subseteq \Baire \mto \mathbf{k}$ satisfies $f \leqW \mathrm{TC}_{\mathbb{N}}^*$. We want to prove that already $f \leqW \mathrm{RT}^1_k$.

We construct the input to $\mathrm{RT}^1_k$ by observing how the reduction $f \leqW \mathrm{TC}_{\mathbb{N}}^*$ works on some input $x \in \dom(f)$. We can first determine the number $j$ of copies of $\mathrm{TC}_\mathbb{N}$ which is used. Starting with $v = (0,0,\ldots,0) \in \mathbb{N}^j$, we simultaneously search for whether the outer reduction witness produces some $i \in \mathbf{k}$ on input $(x,v)$, in which case we write $i$ to the $\mathrm{RT}^1_k$-instance, and whether $v_j$ gets removed from the $j$-th $\mathrm{TC}_\mathbb{N}$-instance produced by the inner reduction witness, in which case we increment $v_j$ by $1$. While waiting for additional numbers to print to the $\mathrm{RT}^1_k$-instance, we repeat the latest one.

It remains to argue that any number $i < k$ which we print infinitely often is a correct solution to $f(x)$. We observe that for any $\mathrm{TC}_\mathbb{N}$-input produced from $x$ which is not enumerating all numbers we will eventually reach a value $v_j$ which remains a correct output. All other $\mathrm{TC}_\mathbb{N}$-instances can return any number anyway. Thus, once we have reached a correct $v_j$ value for all specified positions, all future values $v$ can take are indeed correct solutions for the $\mathrm{TC}_\mathbb{N}^j$-instance queried for $x$, hence the value $i < k$ returned by the outer reduction witness are correct solutions to $f(x)$.
\end{proof}
\end{proposition}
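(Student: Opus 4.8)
The statement is an equivalence, so I would prove the two reductions separately, putting essentially all the work into $\operatorname{Fin}_k(\mathrm{TC}^*_\mathbb{N}) \leqW \mathrm{RT}^1_k$. For the easy direction, note that $\mathrm{RT}^1_k$ already has codomain $\mathbf{k}$, so by the definition of the $k$-finitary part it suffices to exhibit a reduction $\mathrm{RT}^1_k \leqW \mathrm{TC}^*_\mathbb{N}$; and this is immediate from the known chain $\mathrm{RT}^1_k \leqW \mathrm{RT}^1_{k+1} \leqW (\mathrm{RT}^1_2)^k \leqW \mathrm{TC}^k_\mathbb{N} \leqW \mathrm{TC}^*_\mathbb{N}$, whose middle step rests on $\mathrm{RT}^1_2 = (\mathrm{C}_2)' \leqW \mathrm{TC}_\mathbb{N}$ parallelised. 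Since $\mathrm{RT}^1_k$ has codomain $\mathbf{k}$ and lies below $\mathrm{TC}^*_\mathbb{N}$, it lies below the greatest such degree, which is $\operatorname{Fin}_k(\mathrm{TC}^*_\mathbb{N})$.

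For the hard direction I would take an arbitrary $f : \subseteq \Baire \mto \mathbf{k}$ with $f \leqW \mathrm{TC}^*_\mathbb{N}$, say via inner witness $\Phi$ and outer witness $\Psi$, and build $f \leqW \mathrm{RT}^1_k$. On input $x \in \dom(f)$, first read off from $\Phi(x)$ the number $j$ of $\mathrm{TC}_\mathbb{N}$-components together with the co-enumerations of the associated sets $A_1,\dots,A_j$. Then run a process that maintains a candidate answer vector $v \in \mathbb{N}^j$, starting at $(0,\dots,0)$ and incrementing the coordinate $v_l$ each time the $l$-th co-enumeration removes the current value of $v_l$; dovetailed with this, whenever $\Psi(x,v)$ halts for the current $v$, append its value to the $\mathrm{RT}^1_k$-colouring $d : \mathbb{N} \to \mathbf{k}$ being produced, and otherwise append a copy of the colour appended most recently (with, say, $0$ as a placeholder before any genuine value appears). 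Everything is computable and non-blocking, so $d$ is a total computable $k$-colouring; finally, return whatever colour $\mathrm{RT}^1_k$ reports on $d$.

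The correctness argument is the heart of the matter. For each $l \le j$, either $A_l = \emptyset$, in which case the $l$-th coordinate never constrains validity of an answer vector, or $A_l \neq \emptyset$, in which case $v_l$ is incremented only finitely often and comes to rest at $\min A_l \in A_l$. Hence there is a finite stage $M$ past which every coordinate $v_l$ with $A_l \neq \emptyset$ has stabilised; from stage $M$ on, every candidate vector $v$ considered by the process is a genuine solution of the $\mathrm{TC}^*_\mathbb{N}$-instance $\Phi(x)$, so $\Psi(x,v)$ is defined and is a correct answer to $f(x)$. Thus only finitely many colours are appended before stage $M$; every colour appended from stage $M$ on is correct; and at least one correct colour does get appended and thereafter repeated (if $v$ stabilises we eventually see $\Psi(x,v)$ halt because the stable $v$ is valid, and if $v$ keeps changing then every post-$M$ candidate is valid and yields a halting, correct $\Psi$-value). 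Consequently every colour occurring infinitely often in $d$ is a correct answer to $f(x)$, which is exactly the guarantee one needs for the colour returned by $\mathrm{RT}^1_k$ to solve $f(x)$.

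The main obstacle is conceptual rather than computational: one must get comfortable with the fact that the ``solution vector'' $v$ handed to $\Psi$ is being revised as the computation unfolds, which at first looks illegitimate for a Weihrauch reduction. The resolution — and the real content of the proof — is that each individual evaluation $\Psi(x,v^{(m)})$ is on a fixed, constant input; that $\mathrm{TC}_\mathbb{N}$-guesses are robust, a guess remaining valid until it is explicitly removed, so that the candidate vectors converge coordinatewise to an honest solution; and that $\mathrm{RT}^1_k$ is exactly strong enough to sift the resulting stream of outputs: it cannot pick out the ``intended'' answer, but every colour it could possibly return has already been certified correct. Verifying that $d$ is genuinely total and computable, and that the placeholder convention does not disturb which colour is ultimately repeated, is then routine.
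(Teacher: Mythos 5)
Your proof follows the paper's argument essentially verbatim: the easy direction via $\RT^1_k \leqW (\RT^1_2)^{k} \leqW \tcn^{k} \leqW \tcn^*$, and the hard direction by feeding the stream of outputs of the outer witness $\Psi$ on the evolving candidate vector into $\RT^1_k$, using the observation that every coordinate whose underlying set is nonempty stabilises, so that all but finitely many genuinely printed colours are correct. One point deserves more care than either you or the paper's own writeup give it, namely the scheduling of the runs of $\Psi$: if you only ever run $\Psi$ on the \emph{current} candidate and abandon that run whenever a coordinate is removed, then in the case where some $A_l$ is empty the candidate may change faster than $\Psi$ converges (e.g.\ $j=1$, $A_1=\emptyset$, a removal at every step), so no genuine colour is ever appended and the padding repeats the placeholder forever, which need not be a correct answer; conversely, if you print a colour whenever $\Psi$ halts on \emph{any} past candidate, a late-halting run on an early invalid candidate can become the last genuine colour and be padded forever. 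The fix is to dovetail $\Psi$ over all candidates produced so far and, at each stage, print the colour of the \emph{most recently created} candidate on which $\Psi$ has by then been seen to halt: in the stabilising case the final valid candidate eventually dominates, and in the non-stabilising case all but finitely many printed colours come from valid candidates. With that bookkeeping your correctness argument goes through exactly as you state it.
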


\begin{corollary}
$\operatorname{Fin}(\mathrm{TT}^1_+) \equivW \mathrm{RT}^1_+$
\begin{proof}
By combining Proposition \ref{prop:fintcn} and Proposition \ref{prop:tttcn}.
\end{proof}
\end{corollary}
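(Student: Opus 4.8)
The plan is to obtain the equivalence by sandwiching $\operatorname{Fin}(\mathrm{TT}^1_+)$ between two copies of $\RT^1_+$, using the two reductions established above together with basic properties of the finitary-part operator. The crucial auxiliary fact is that $\mathrm{TT}^1_+ \leqW \mathrm{TC}^*_\mathbb{N}$: from an $\mathrm{TT}^1_+$-instance we may first read off the number $k$ of colours, and then run the reduction of Proposition~\ref{prop:tttcn} witnessing $\TT_k \leqW \tcn^{k-1}$, which uses only finitely many (namely $k-1$) copies of $\tcn$. In the other direction, the immediate componentwise reduction $\RT^1_k \leqW \TT_k$ noted earlier gives $\RT^1_+ \leqW \mathrm{TT}^1_+$. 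Altogether $\RT^1_+ \leqW \mathrm{TT}^1_+ \leqW \mathrm{TC}^*_\mathbb{N}$.

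Applying the operator $\operatorname{Fin}$, which is monotone under Weihrauch reducibility (see \cite{paulycipriani1}), I would deduce $\operatorname{Fin}(\RT^1_+) \leqW \operatorname{Fin}(\mathrm{TT}^1_+) \leqW \operatorname{Fin}(\mathrm{TC}^*_\mathbb{N})$. Here $\operatorname{Fin}(\mathrm{TC}^*_\mathbb{N}) = \bigsqcup_k \operatorname{Fin}_k(\mathrm{TC}^*_\mathbb{N}) \equivW \bigsqcup_k \RT^1_k \equivW \RT^1_+$ by Proposition~\ref{prop:fintcn} and the definition of $\RT^1_+$; and $\operatorname{Fin}(\RT^1_+) \equivW \RT^1_+$, since each $\RT^1_k$ has codomain $\mathbf{k}$ and reduces to $\RT^1_+$ via the coproduct inclusion, whence $\RT^1_k \leqW \operatorname{Fin}_k(\RT^1_+)$ and so $\RT^1_+ = \bigsqcup_k \RT^1_k \leqW \operatorname{Fin}(\RT^1_+)$, the reverse inequality being trivial. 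Chaining the three estimates gives $\RT^1_+ \leqW \operatorname{Fin}(\mathrm{TT}^1_+) \leqW \RT^1_+$, as required.

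I do not expect a genuine obstacle. The two points that need a little care are (i) that the number of $\tcn$-copies used in $\mathrm{TT}^1_+ \leqW \mathrm{TC}^*_\mathbb{N}$ depends on the input and is not fixed in advance --- which is exactly why Proposition~\ref{prop:fintcn} is stated for $\mathrm{TC}^*_\mathbb{N}$ rather than for a fixed power $\tcn^j$ --- and (ii) the bookkeeping needed to pass a countable supremum through a Weihrauch reduction, i.e.\ verifying that the reduction families witnessing monotonicity of $\operatorname{Fin}$ and the coproduct inclusions are uniform in the index; both are routine once one works with the explicit universal representatives of $\operatorname{Fin}_k$ from \cite{paulycipriani1}. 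Point (ii) can also be sidestepped by arguing levelwise: any representative $f : \subseteq \Baire \mto \mathbf{k}$ of $\operatorname{Fin}_k(\mathrm{TT}^1_+)$ satisfies $f \leqW \mathrm{TT}^1_+ \leqW \mathrm{TC}^*_\mathbb{N}$, so the construction in the proof of Proposition~\ref{prop:fintcn} yields $f \leqW \RT^1_k$, giving $\operatorname{Fin}_k(\mathrm{TT}^1_+) \equivW \RT^1_k$ for each $k$.
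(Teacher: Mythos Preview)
Your argument is correct and is exactly the intended expansion of the paper's terse ``by combining'': you use Proposition~\ref{prop:tttcn} to get $\mathrm{TT}^1_+ \leqW \mathrm{TC}^*_\mathbb{N}$, then apply the monotone operator $\operatorname{Fin}$ and invoke Proposition~\ref{prop:fintcn} to bound above by $\RT^1_+$, with the lower bound coming from $\RT^1_+ \leqW \mathrm{TT}^1_+$. The levelwise alternative you mention at the end is equally valid and perhaps the cleanest way to present it.
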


\begin{corollary}
\label{corr:isthereamonochromatictree}
The problem ``Given a colouring $c : 2^{<\omega} \to \mathbf{k}$, find some $i \in \mathbf{k}$ such that there exists a copy of $2^{<\omega}$ coloured $i$'' is equivalent to $\RT^1_k$.
\end{corollary}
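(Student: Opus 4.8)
Write $P_k$ for the problem in the statement; it has codomain $\mathbf{k}$. The plan is to prove $\RT^1_k \leqW P_k \leqW \RT^1_k$, reusing the machinery of Propositions~\ref{prop:tttcn} and~\ref{prop:fintcn} almost verbatim: $P_k$ is just the ``report the colour only'' version of $\TT_k$, so this corollary stands to Proposition~\ref{prop:fintcn} exactly as the preceding corollary does.

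For the upper bound I would first note $P_k \leqW \TT_k$: feed the given $c : 2^{<\omega} \to \mathbf{k}$ to $\TT_k$, and from the monochromatic copy returned read off the colour of (any of) its vertices; this copy witnesses that colour to be a correct $P_k$-answer. By Proposition~\ref{prop:tttcn}, $\TT_k \leqW \tcn^{k-1} \leqW \mathrm{TC}^*_\mathbb{N}$ (with $k=1$ being trivial), so $P_k \leqW \mathrm{TC}^*_\mathbb{N}$. Since $P_k$ has codomain $\mathbf{k}$, this gives $P_k \leqW \Fin_k(\mathrm{TC}^*_\mathbb{N})$, and $\Fin_k(\mathrm{TC}^*_\mathbb{N}) \equivW \RT^1_k$ by Proposition~\ref{prop:fintcn}.

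For the lower bound, given $c : \mathbb{N} \to \mathbf{k}$ the inner reduction witness produces the colouring $\tilde{c} : 2^{<\omega} \to \mathbf{k}$ with $\tilde{c}(\sigma) = c(|\sigma|)$, and the outer reduction witness returns the $P_k$-answer unchanged. For correctness, suppose $S$ is a $\tilde{c}$-monochromatic copy of $2^{<\omega}$ of colour $i$. Then $S$ is infinite, and as each level of $2^{<\omega}$ is finite, $S$ must meet infinitely many levels; for every level $\ell$ met by $S$ we get $c(\ell) = i$, so $c^{-1}(i)$ is infinite and $i$ is a correct $\RT^1_k$-answer. The existence of some correct $P_k$-answer is just indivisibility of the tree underlying $\TT_k$; if one prefers an explicit witness, when $c^{-1}(i) = \{\ell_0 < \ell_1 < \cdots\}$ is infinite the map $w \mapsto \sigma_w$ given by $\sigma_\lambda = 0^{\ell_0}$ and $\sigma_{wb} = \sigma_w\, b\, 0^{\ell_{|w|+1}-|\sigma_w|-1}$ embeds $2^{<\omega}$ as a $\tilde{c}$-monochromatic copy of colour $i$. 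I expect the only slightly delicate point to be checking that the strong-subtree notion of ``copy of $2^{<\omega}$'' really forces a copy to meet infinitely many levels and to keep the correct branching structure in the embedding above; everything else is routine bookkeeping.
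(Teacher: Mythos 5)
Your proof is correct and follows exactly the route the paper intends (the corollary is left without an explicit proof there, but it is clearly meant to follow from Propositions~\ref{prop:tttcn} and~\ref{prop:fintcn} just as the preceding corollary does): the upper bound via $P_k \leqW \TT_k \leqW \mathrm{TC}^*_\mathbb{N}$ combined with $\Fin_k(\mathrm{TC}^*_\mathbb{N}) \equivW \RT^1_k$, and the lower bound via the level colouring $\tilde{c}(\sigma)=c(|\sigma|)$. The point you flag as delicate is indeed fine, since any copy of $2^{<\omega}$ is infinite and each level is finite, so it meets infinitely many levels.
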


The following is the main result of this section, it fully characterizes the $j$-finitary part of any $\TT_k$. 
\begin{theorem}
\label{theo:finitarypart}
$\Fin_j(\TT_k) = \RT^1_{\min\{j,k\}}$ for all $j,k \in \mathbb{N}$.
\end{theorem}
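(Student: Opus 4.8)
The plan is to prove the two inclusions separately, splitting the upper bound into the cases $j\le k$ and $j>k$. For $\RT^1_{\min\{j,k\}}\leqW\Fin_j(\TT_k)$ I would just use that $\RT^1_k\leqW\TT_k$ (noted in the paper) together with the fact that $\RT^1_{\min\{j,k\}}$ has a representative whose codomain $\mathbf{\min\{j,k\}}$ sits inside $\mathbf{j}$; since $\Fin_j(\TT_k)$ is by definition the greatest Weihrauch degree below $\TT_k$ admitting a codomain-$\mathbf{j}$ representative, this gives the reduction.

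When $j\le k$ the converse is immediate from results already in the paper: by Proposition~\ref{prop:tttcn} we have $\TT_k\leqW\tcn^{k-1}\leqW\mathrm{TC}^*_\mathbb{N}$, hence $\Fin_j(\TT_k)\leqW\Fin_j(\mathrm{TC}^*_\mathbb{N})$, and Proposition~\ref{prop:fintcn} identifies the right-hand side with $\RT^1_j=\RT^1_{\min\{j,k\}}$. (Note this argument applies for every $j$ and gives $\Fin_j(\TT_k)\leqW\RT^1_j$ unconditionally — but for $j>k$ that bound is strictly too weak, and the actual content of the theorem is to sharpen it.)

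So the real work is the case $j>k$, where $\min\{j,k\}=k$ and I must show that every $f\leqW\TT_k$ with codomain $\mathbf{j}$ reduces to $\RT^1_k$ — equivalently, this already encodes the non-reducibility $\RT^1_{k+1}\nleqW\TT_k$. Fix such an $f$ with inner witness $H$ and outer witness $K$, and compose with the reduction $\TT_k\leqW\tcn^{k-1}$ of Proposition~\ref{prop:tttcn}. This gives a reduction $f\leqW\tcn^{k-1}$ whose outer witness, on a candidate answer $\vec v$ coding nested pairs $(\sigma_0,I_0),\dots,(\sigma_{k-2},I_{k-2})$, reads off the longest syntactically nested prefix ending in some $(\sigma_{r'},I_{r'})$, fixes a colour $b=\min I_{r'}\in\mathbf{k}$, greedily builds a $b$-monochromatic copy $T$ below $\sigma_{r'}$, and outputs $K(x,T)$. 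I would then run the simulation from the proof of Proposition~\ref{prop:fintcn} against this reduction, but with the decisive modification that the $\RT^1$-instance is fed the colour $b\in\mathbf{k}$ currently used by the greedy step, rather than the $\mathbf{j}$-valued output of $K$. The point is that once the $\tcn$-coordinates belonging to the ``true chain'' have stabilised (after finitely many stages), the longest-nested-prefix construction always lands at a pair $(\sigma_{r'},I_{r'})$ with $\sigma_{r'}$ extending a fixed node $\sigma_r$ and $I_{r'}\subseteq I_r$, where every colour of $I_r$ is dense everywhere below $\sigma_r$; hence from some stage on only colours of $I_r$ are written, so the colour $i$ returned infinitely often by $\RT^1_k$ is dense everywhere below $\sigma_r$ and in particular is a good colour. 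The outer witness of the final reduction then, given $i$, re-runs the simulation to recover $\sigma_r$ and greedily builds an $i$-monochromatic copy $T$ below it — which cannot stall, because $i$ is dense there — and returns $K(x,T)\in f(x)$.

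\textbf{The main obstacle} is precisely that recovery step in the $j>k$ case. The $\tcn$-coordinates that are ``don't cares'' never stabilise, so one must argue carefully that (a) the colours written to the $\RT^1_k$-instance really do become confined to $I_r$ — which uses that syntactic nesting forces the later colour sets to lie inside $I_r$ — and, more delicately, (b) from the returned colour $i$ one can recover enough data (a node below which $i$ is dense) to run the greedy construction and apply $K$ \emph{to a genuine monochromatic copy}, never being fooled into committing to a finite approximate copy that is not extendible. This is exactly where the structural fact underlying Proposition~\ref{prop:tttcn} — that every colouring of $2^{<\omega}$ has a vertex below which a nonempty set of colours is dense everywhere — must be exploited, and organising the bookkeeping so that a colour is written only when the accompanying greedy construction is certified to have succeeded on an extendible copy is the crux of the argument.
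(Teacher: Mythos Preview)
Your treatment of the lower bound and of the case $j\le k$ is fine and matches the paper. The substance is the case $j>k$, and here your proposal has a genuine gap that you yourself flag but do not close.

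The difficulty is exactly the ``recovery step''. From $\RT^1_k$ you obtain a colour $i\in\mathbf{k}$, and you then need to produce an element of $\mathbf{j}$ by applying the outer witness $K$ of $f\leqW\TT_k$ to an actual $i$-monochromatic copy of $2^{<\omega}$. Your argument that the colours written eventually lie in $I^*_{r-1}$, and that every such colour is dense below $\sigma^*_{r-1}$, is correct. But $\sigma^*_{r-1}$ is a \emph{limit} of the simulation, not something computable from it; and when the early $\tcn$-coordinates do not stabilise (e.g.\ when all colours are dense everywhere and the very first instance is empty), the current $\sigma_{r'}$ keeps jumping to incomparable nodes, so any greedy build keyed to it restarts infinitely often. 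You cannot ``certify'' that a finite greedy prefix is extendible at a finite stage --- extendibility is not open --- so the bookkeeping you allude to cannot resolve this. The situation is not that the details are tedious; it is that no computable procedure taking $(x,i)$ to an $i$-monochromatic copy exists in general, and you have not shown how to extract the $\mathbf{j}$-answer without one.

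The paper's proof avoids this problem by a completely different mechanism. Rather than funnelling through $\TT_k\leqW\tcn^{k-1}$ and trying to recover a dense node after the fact, it builds \emph{commit trees}: nested finite configurations of $c$-monochromatic finite subtrees on which $K$ has already committed, with up to $k$ layers corresponding to distinct colours. A Ramsey-type condensation lemma shows that a sufficiently tall layered commit tree must contain a vertex whose associated finite subtree is extendible, and identifying \emph{which layer} contains it is a $\mathbf{k}$-valued problem. This problem is solved not by $\RT^1_k$ directly but by invoking $\mathrm{KL}$ together with the known fact $\Fin_k(\mathrm{KL})\equivW\RT^1_k$ (a result quoted from \cite{paulypradicsolda}). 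Once the layer is known, the corresponding $\ell\in\mathbf{j}$ is already recorded --- no greedy build needs to be performed after the $\RT^1_k$ call. This is the key idea your proposal is missing.
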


Before we prove the theorem, we state the following consequences:

\begin{corollary}
\label{corr:finttkisrtk}
$\Fin(\TT_k) \equivW \RT^1_k$
\end{corollary}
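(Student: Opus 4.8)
The plan is to obtain the corollary directly from Theorem \ref{theo:finitarypart} by evaluating a countable join. Recall that by definition $\Fin(\TT_k) = \bigsqcup_{j \in \mathbb{N}} \Fin_j(\TT_k)$, and that in the Weihrauch lattice the countable coproduct $\bigsqcup$ computes the supremum: $\bigsqcup_j g_j \leqW h$ holds iff $g_j \leqW h$ for every $j$, while each individual $g_j$ satisfies $g_j \leqW \bigsqcup_j g_j$.

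By Theorem \ref{theo:finitarypart} we have $\Fin_j(\TT_k) \equivW \RT^1_{\min\{j,k\}}$ for every $j$. Since $\min\{j,k\} \leq k$ and $\RT^1_m \leqW \RT^1_k$ whenever $m \leq k$, every term of the join satisfies $\Fin_j(\TT_k) \leqW \RT^1_k$; hence $\RT^1_k$ is an upper bound for the whole family, and so $\Fin(\TT_k) \leqW \RT^1_k$. Conversely, instantiating the theorem at $j = k$ gives $\Fin_k(\TT_k) \equivW \RT^1_k$, and since $\Fin_k(\TT_k) \leqW \Fin(\TT_k)$ this yields $\RT^1_k \leqW \Fin(\TT_k)$. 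Combining the two inequalities gives $\Fin(\TT_k) \equivW \RT^1_k$.

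There is no real obstacle here beyond having Theorem \ref{theo:finitarypart} available; the only point worth stating explicitly is that $\bigsqcup$ genuinely realizes the supremum in the Weihrauch degrees, so that a family of degrees all bounded by $\RT^1_k$ and containing $\RT^1_k$ itself has join exactly $\RT^1_k$. I note that one cannot shortcut the full parameterized theorem for the upper-bound direction: Propositions \ref{prop:tttcn} and \ref{prop:fintcn} only give $\Fin_j(\TT_k) \leqW \Fin_j(\tcn^*) \equivW \RT^1_j$, hence merely $\Fin(\TT_k) \leqW \RT^1_+$; the sharpening for $j > k$ is precisely the content of Theorem \ref{theo:finitarypart}, so invoking the theorem is the natural route.
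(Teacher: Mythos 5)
Your argument is correct and matches the paper's (implicit) proof: the corollary is stated without a proof body precisely because it is the join over $j$ of the instances of Theorem \ref{theo:finitarypart}, exactly as you compute, with the upper bound from $\min\{j,k\} \leq k$ and the lower bound from the term $j = k$. One pedantic caveat on the point you flag: the countable coproduct is \emph{not} in general the least upper bound in the Weihrauch lattice (one needs the reductions $g_j \leqW h$ to be uniform in $j$), but here the bounding reductions $\RT^1_{\min\{j,k\}} \leqW \RT^1_k$ are evidently uniform, so the step is sound.
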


The preceding corollary together with Corollary \ref{corr:isthereamonochromatictree} states that for computing multi-valued functions with finite codomain we cannot do anything stronger with $\TT_k$ than coding the into for what colours there are monochromatic copies of $2^{<\omega}$. By Proposition \ref{prop:ecfcleqwtt2}, this does not extend to multi-valued functions with discrete codomain.

\begin{corollary}
$\C_3 \nleqW \TT_2$
\begin{proof}
By Corollary \ref{corr:finttkisrtk}, as $\C_3$ has finite codomain, if $\C_3 \leqW \TT_2$ then $\C_3 \leqW \RT^1_2$. The latter is easily seen to be false.
\end{proof}
\end{corollary}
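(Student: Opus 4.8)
The plan is to obtain this as an immediate consequence of the characterization of the finitary part of $\TT_k$, reducing everything to the (folklore) fact that $\C_3 \nleqW \RT^1_2$, which is itself witnessed cleanly by guessability.

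First I would note that $\C_3$ has a representative that is a multi-valued function with codomain the discrete three-element space $\mathbf{3}$. Hence, if $\C_3 \leqW \TT_2$, then $\C_3$ is a Weihrauch degree below $\TT_2$ admitting a representative with codomain $\mathbf{3}$, so by the very definition of $\Fin_3$ as the greatest such degree we get $\C_3 \leqW \Fin_3(\TT_2)$. Applying Theorem \ref{theo:finitarypart} (equivalently Corollary \ref{corr:finttkisrtk} in the form $\Fin_j(\TT_k) = \RT^1_{\min\{j,k\}}$), we have $\Fin_3(\TT_2) = \RT^1_{\min\{3,2\}} = \RT^1_2$, and therefore $\C_3 \leqW \RT^1_2$.

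It then remains to derive a contradiction from $\C_3 \leqW \RT^1_2$, i.e.\ to establish $\C_3 \nleqW \RT^1_2$. The cleanest self-contained route is via $k$-guessability. The problem $\RT^1_2$ is $2$-guessable: since it has codomain $\mathbf{2}$, the earlier characterization of $k$-guessability (or simply the observation that the constant map sending every colouring to $(0,1)$ realizes $\breve{(\RT^1_2)}_2$, by the pigeonhole principle) applies. On the other hand, $\C_3$ is \emph{not} $2$-guessable: any computable attempt must commit, after reading some finite prefix of the enumeration of $\mathbf{3} \setminus A$, to two elements of $\overline{\mathbf{3}}$ that are genuine elements of $\mathbf{3}$ (since the input could still be the empty enumeration, where $\C_3$ excludes nothing from being wrong unless an actual element is named); but that finite prefix is still consistent with $A$ being any of the three singletons, and two guesses drawn from $\mathbf{3}$ cannot cover all three — this is exactly the diagonalization of Proposition \ref{prop:accnknotkguessable}. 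Since $k$-guessability is closed downwards under Weihrauch reducibility by Proposition \ref{prop:guessabledownwards}, $\C_3 \leqW \RT^1_2$ would force $\C_3$ to be $2$-guessable, a contradiction; hence $\C_3 \nleqW \TT_2$.

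There is no substantial obstacle here once Corollary \ref{corr:finttkisrtk} is available; the only two points requiring a little care are (i) phrasing the passage to $\Fin_3(\TT_2)$ correctly via the maximality built into the definition of $\Fin_k$, and (ii) justifying $\C_3 \nleqW \RT^1_2$, for which the $2$-guessability argument above is the most economical option (one could alternatively cite known separations among $\C_n$ and $\RT^1_k$ in the Weihrauch lattice).
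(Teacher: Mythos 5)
Your proposal is correct and follows exactly the paper's route: pass to the finitary part via Corollary \ref{corr:finttkisrtk} (equivalently Theorem \ref{theo:finitarypart} with $j=3$, $k=2$) to reduce the claim to $\C_3 \nleqW \RT^1_2$. The paper dismisses that last separation as ``easily seen''; your $2$-guessability diagonalization is a perfectly good way to fill in that detail, but it is not a different approach.
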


The following recovers a result already established by Gill:
\begin{corollary}
\label{corr:cnnleqwttk}
$\C_\mathbb{N} \nleqW \TT_k$ for all $k \in \mathbb{N}$.
\begin{proof}
For all $k \in \mathbb{N}$ it holds that $\C_{k+1} \leqW \C_\mathbb{N}$, but $\C_{k+1} \leqW \TT_k$ would imply $\C_{k+1} \leqW \RT^1_k$ by Corollary \ref{corr:finttkisrtk}, which is easily seen to be false.
\end{proof}
\end{corollary}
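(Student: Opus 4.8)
The plan is to follow the one-line argument indicated in the excerpt and expand it slightly, arguing by contradiction. Suppose $\C_\mathbb{N} \leqW \TT_k$ for some fixed $k \in \mathbb{N}$. Every instance of $\C_{k+1}$ is in particular an instance of $\C_\mathbb{N}$, so $\C_{k+1} \leqW \C_\mathbb{N}$, and transitivity of $\leqW$ yields $\C_{k+1} \leqW \TT_k$.

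Next I would push this reduction down to the finitary part. The problem $\C_{k+1}$ has a representative with codomain $\mathbf{k+1}$, so by the very definition of $\Fin_{k+1}$ as the greatest Weihrauch degree below $\TT_k$ with such a representative, $\C_{k+1} \leqW \TT_k$ implies $\C_{k+1} \leqW \Fin_{k+1}(\TT_k) \leqW \Fin(\TT_k)$. By Corollary \ref{corr:finttkisrtk} we have $\Fin(\TT_k) \equivW \RT^1_k$, hence $\C_{k+1} \leqW \RT^1_k$.

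It remains to see that $\C_{k+1} \leqW \RT^1_k$ is false; the cleanest route is via guessability. The problem $\RT^1_k$ is $k$-guessable, witnessed by the constant map returning the tuple $(0,1,\ldots,k-1) \in \mathbf{k}^k$, since some colour class is always infinite. By Proposition \ref{prop:guessabledownwards}, $\C_{k+1} \leqW \RT^1_k$ would then make $\C_{k+1}$ $k$-guessable as well. But this fails by the same adaptive diagonalization as in Proposition \ref{prop:accnknotkguessable}: against any computable map producing $k$ guesses in $\overline{(\mathbf{k+1})}$ on a $\C_{k+1}$-instance, we build the instance (an enumeration of the forbidden set) on the fly, forbidding the $i$-th guessed value as soon as it becomes a concrete element of $\mathbf{k+1}$; at most $k$ of the $k+1$ values are ever forbidden, so the instance is legitimate, yet no guess is a solution. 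This contradiction completes the proof, and since $k$ was arbitrary we conclude $\C_\mathbb{N} \nleqW \TT_k$ for all $k$.

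There is essentially no genuine obstacle here — the statement is a routine corollary of Corollary \ref{corr:finttkisrtk}. The only points needing a moment of care are that a reduction $\C_{k+1} \leqW \TT_k$ automatically factors through $\Fin_{k+1}(\TT_k)$ because $\C_{k+1}$ already has codomain $\mathbf{k+1}$, and the small verification that $\C_{k+1} \nleqW \RT^1_k$, for which the guessability argument above (or, alternatively, a direct cardinality count on the finitely many possible outputs of a putative reduction) suffices.
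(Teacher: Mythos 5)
Your proposal is correct and follows exactly the paper's route: pass from $\C_\mathbb{N}$ to $\C_{k+1}$, use the finitary-part corollary to get $\C_{k+1} \leqW \RT^1_k$, and refute that. The only addition is that you spell out the final step (which the paper dismisses as ``easily seen to be false'') via $k$-guessability of $\RT^1_k$ and a diagonalization against $\C_{k+1}$; this is a valid and careful way to fill in that gap.
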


To prove Theorem \ref{theo:finitarypart}, we use the notion of a \emph{commit tree} which describes how a putative reduction is acting on a prefix:

\begin{definition}
We are given a colouring $c : 2^{<\omega} \to \mathbf{k}$ and a continuous partial function $H$ from $c$-monochromatic finite subtrees of $2^{<\omega}$ to $\mathbf{j}$.
\begin{enumerate}
\item A \emph{unitary commit tree} for $i \in \mathbf{k}$, $x \in \mathbf{j}$ is a finite tree $T$ of uniform height where each vertex $v \in T$ is labeled by a set of vertices $S_v \subseteq 2^{<\omega}$ all having colour $i$ under $c$ isomorphic to some $2^{\leq \ell_v}$ such that $H$ returns $x$ on reading a prefix of $S_v$. The degree of a non-leaf $v \in T$ is equal to the number of leaves in $S_v$. If $u \in T$ is the $i$-th child of $v \in T$, then all of $S_u$ is below the $i$-th leaf in $S_v$.
\item A \emph{$(n_0,n_1,\ldots,n_m)$-layered commit tree} is a finite tree of uniform height $\sum_{i \leq m} n_i$ where each vertex $v$ is labeled by a $c$-monochromatic set of vertices $S_v \subseteq 2^{<\omega}$ such that the colour of $S_v$ only depends on $L(v) := \min \{i \leq m \mid \sum_{i' \leq i} n_{i'} \geq \operatorname{height}(v)\}$, and moreover, if $L(v) \neq L(u)$, then $S_v$ and $S_u$ have different colour. If $v$ has height $\sum_{i' \leq i} n_{i'}$ for some $i < m$, then the degree of $v$ is $1$, and if $u$ is the unique successor of $v$, then every vertex in $S_u$ is below the root of $S_v$. Otherwise the degree of a non-leaf $v$ is equal to the number of leaves of $S_v$. Finally, for every vertex $v$ we have that $H$ will output some $x \in \mathbf{j}$ upon reading a prefix of $S_v$ where $x$ only depends on $L(v)$.
\item A commit tree $T_1$ is a \emph{condensation} of a commit tree $T_0$ if it is obtained by repeatedly taking a vertex $v \in T_0$ and a successor $v'$ of $v$, and then replacing the subtree rooted at $v$ with the subtree rooted at $v'$.
\end{enumerate}
\end{definition}

A unitary commit tree describes ``plenty'' of opportunities for a Weihrauch reduction to $\tt_k$ from a problem with codomain $\mathbf{j}$ to give a particular answer. A layered commit tree is comprised of several layers of uniform commit trees, and if we have enough layers of sufficient height, one of these opportunities for the reduction actually has to be valid. This idea will be formalized and proven in the following.

\begin{lemma}
Let $T_0$ be a unitary commit tree of height $2^{\ell + 1}$, and let $d$ be a $2$-colouring of the leaves of $T_0$. Then there exists a condensation $T_1$ of $T_0$ having height $2^\ell$ such that $d$ restricted to the leaves of $T_1$ is monochromatic.
\begin{proof}
We construct $T_1$ layer by layer, starting with the leaves. In each stage, we chose which vertices from the next two layers of $T_0$ to keep. (If a vertex is kept, but no suitable ancestors are, it can be lost at a later stage.)

In the first stage, we consider the two layers of vertices in $T_0$ closest to the leaves. Let $v_0,v_1,\ldots,v_m$ be the vertices on the penultimate layer. For each $v_i$, either it has a child all of whose leaves have the same colour; or all of its children have leaves with both colours. In the former case, we replace $v_i$ with such a child. In the latter case, we replace each child of $v_i$ with a leaf below it coloured $0$. The result is a condensation of $T_0$ with its height reduced by $1$ such that every vertex in the bottom layer has only leaves of a single colour.

We then consider the next two layers, using the vertices previously chosen as leaves, with the colour being the one all their leaves share. Repeating this process for $2^\ell$ times total yields the desired result.
\end{proof}
\end{lemma}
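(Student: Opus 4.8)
The plan is to prove the statement by induction on $\ell$, in the form of the claim $P(\ell)$: every unitary commit tree of height $2^{\ell+1}$ whose leaves are $2$-coloured admits a condensation of height $2^{\ell}$ whose leaves all receive the same colour. The lemma is exactly $P(\ell)$, and the driving identity is $2^{\ell+2} = 2^{\ell+1} + 2^{\ell+1}$: a tree of height $2^{\ell+2}$ cut at its middle layer decomposes into a top part and a family of bottom parts, each of height $2^{\ell+1}$, to which the inductive hypothesis applies.

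For the base case $P(0)$ I would argue directly on a tree $T_0$ of height $2$ with root $r$. Each child $u$ of $r$ carries a labelled set $S_u$ whose leaves are exactly the leaves of $T_0$ lying below $u$. Either some child $u$ has all those leaves of a single colour, in which case condensing $r$ to $u$ yields a monochromatic tree of height $1$; or every child $u$ has leaves of both colours, in which case for each child $u$ we pick a leaf $w_u$ below it with colour $0$, condense $u$ to $w_u$, and are left with $r$ and all its leaves coloured $0$. (This is the same dichotomy that would drive an iterated, bottom-up version of the argument, contracting the two lowest layers at a time.)

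For the inductive step, assume $P(\ell)$ and take a unitary commit tree $T_0$ of height $2^{\ell+2}$ with a $2$-colouring $d$ of its leaves. Let $M$ be the set of vertices of $T_0$ at depth $2^{\ell+1}$. For each $v \in M$, the subtree rooted at $v$ is a unitary commit tree of height $2^{\ell+1}$, since the defining clauses --- common colour of the labelled sets, the value $H$ outputs on a prefix of each labelled set, degrees equal to the number of leaves of the labelled set, and nesting of the labelled sets --- are all inherited from $T_0$; by $P(\ell)$ it has a condensation $T_1^{v}$ of height $2^{\ell}$ all of whose leaves are coloured with a fixed $\chi(v) \in \{0,1\}$. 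The initial segment of $T_0$ from the root down to $M$ is likewise a unitary commit tree of height $2^{\ell+1}$, now with $\chi$ as a $2$-colouring of its leaves; by $P(\ell)$ it has a condensation $T'$ of height $2^{\ell}$ on whose leaves $\chi$ is constantly some $b$. I would then realise these condensations inside $T_0$: first carry out the condensation steps taking the initial segment to $T'$, and then, under each surviving leaf $v$ of $T'$ (for which necessarily $\chi(v) = b$), carry out the condensation steps taking the subtree rooted at $v$ to $T_1^{v}$. The result $T_1$ is a condensation of $T_0$, has uniform height $2^{\ell} + 2^{\ell} = 2^{\ell+1}$, and all of its leaves are coloured $b$; this is $P(\ell+1)$.

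The conceptual content is all in the two splits above; what remains is bookkeeping about condensations that I expect to be the only fiddly part: that a condensation of a unitary commit tree is again one (so the inductive hypothesis applies to the pieces and the output qualifies), that the initial segment and the subtrees below $M$ really verify every clause of the definition, and that a condensation of the initial segment followed by condensations below its surviving leaves is again a condensation of $T_0$ of uniform height. These are routine but must be checked explicitly.
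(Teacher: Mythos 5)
Your proof is correct. Its combinatorial core coincides with the paper's: the dichotomy in your base case (either some child of the root has all its leaves of one colour, or every child sees both colours and one can retreat to colour-$0$ leaves throughout) is exactly the single ``stage'' the paper performs. Where you differ is in how that step is iterated. The paper sweeps bottom-up, collapsing the two lowest layers into one and repeating $2^{\ell}$ times, each pass costing one unit of height; you instead induct on $\ell$, cutting the tree at its middle layer $M$, applying the hypothesis to the height-$2^{\ell+1}$ subtrees below $M$ to get monochromatic condensations with colours $\chi(v)$, and then to the top part with the induced colouring $\chi$. The paper's linear sweep keeps the bookkeeping minimal: each stage is manifestly a condensation that preserves uniform height, and nothing needs to be said about composing condensations of pieces. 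Your recursion makes the height arithmetic ($2^{\ell+1}=2^{\ell}+2^{\ell}$) transparent, but shifts the work into the facts you rightly flag as needing verification: that initial segments and rooted subtrees of a unitary commit tree are again unitary commit trees, that condensation preserves the structure, and that condensing the top part and then condensing below its surviving leaves is a condensation of the whole. All of these do go through --- in particular the leaves of the condensed top part are still vertices of the original layer $M$, so $\chi$ is defined on them, and the subtrees hanging below them are unaffected by the top condensation --- so the argument is sound; it is a divide-and-conquer repackaging of the paper's iteration rather than a genuinely new idea.
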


\begin{corollary}
\label{corr:condensation}
Let $T_0$ be a unitary commit tree of height $2^{\ell + s}$, and let $d$ be a $2^{s}$-colouring of the leaves of $T_0$. Then there exists a condensation $T_1$ of $T_0$ having height $2^\ell$ such that $d$ restricted to the leaves of $T_1$ is monochromatic.
\end{corollary}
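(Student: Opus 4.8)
The plan is to deduce the corollary by iterating the preceding lemma $s$ times, eliminating one bit of the $2^s$ many colours at each pass. First I would fix a bijection between $\mathbf{2^s}$ and $\mathbf{2}^s$ and, for $1 \leq t \leq s$, define the auxiliary $2$-colouring $d_t$ of the leaves of $T_0$ by composing $d$ with that bijection and then the $t$-th coordinate projection; two leaves then receive the same colour under $d$ exactly when they agree under every $d_t$.

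Next I would run the following induction on $t$. Put $T^{(0)} := T_0$, a unitary commit tree for the given $i \in \mathbf{k}$, $x \in \mathbf{j}$ of height $2^{\ell+s}$. Assuming that $T^{(t-1)}$ is a unitary commit tree for the same pair $i,x$, of height $2^{(\ell+s-t)+1}$, on whose leaves $d_1, \ldots, d_{t-1}$ are all constant, I would apply the lemma to $T^{(t-1)}$ with the $2$-colouring $d_t$ restricted to the leaves of $T^{(t-1)}$, taking $\ell + s - t$ in the role of $\ell$. This produces a condensation $T^{(t)}$ of $T^{(t-1)}$, of height $2^{\ell+s-t}$, on whose leaves $d_t$ is constant; since a condensation only ever passes to subtrees, the leaves of $T^{(t)}$ are among those of $T^{(t-1)}$, so $d_1, \ldots, d_{t-1}$ remain constant there too and the induction continues. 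After $s$ steps I would take $T_1 := T^{(s)}$: it has height $2^\ell$; it is a condensation of $T_0$ because condensation is transitive (concatenate the defining operations of the successive condensations); and all of $d_1, \ldots, d_s$ are constant on its leaves, hence so is $d$.

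I do not expect a genuine obstacle here; the two points to verify carefully are that the tree the lemma hands back is again a unitary commit tree for the same colour $i$ and answer $x$ --- so that the lemma may be applied to its own output --- and that being $d$-monochromatic on the leaves is preserved under any further condensation. Both are immediate from the definitions, since condensing only discards vertices and reroutes to already-present descendants while retaining the labels $S_v$, their $c$-colour, and the behaviour of $H$ on them.
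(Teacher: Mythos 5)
Your proof is correct and is exactly the intended argument: the paper states this as an immediate corollary of the preceding lemma, obtained by splitting the $2^s$-colouring into $s$ binary coordinates and applying the lemma once per coordinate, using that condensations compose and only shrink the leaf set. Nothing to add.
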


\begin{corollary}
\label{corr:condensationlayered}
Let $T_0$ be a $(2^{\ell_0 + s},2^{\ell_1 + s},\ldots,2^{\ell_m + s})$-layered commit tree, and let $d$ be a $2^s$-colouring of the leaves of $T_1$. Then there exists a  $(2^{\ell_0},2^{\ell_1},\ldots,2^{\ell_m})$-layered commit tree $T_1$ such that $T_1$ is a condensation of $T_0$, and $d$ restricted to the leaves of $T_1$ is monochromatic.
\end{corollary}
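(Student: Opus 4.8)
The plan is to condense a layered commit tree one layer at a time, from the bottom layer (layer $m$) up to the root (layer $0$), applying Corollary~\ref{corr:condensation} to each layer separately. The structural fact that makes this work is that, for every $i \le m$, the part of a layered commit tree sitting within layer $i$ is, below each of the finitely many vertices that form the top of layer $i$, a unitary commit tree of height $n_i$: the labels there all carry the single colour attached to layer $i$, they all make $H$ output the single value attached to layer $i$, and branching is governed by the number of leaves of the labels — provided we treat the bottom vertices of layer $i$ as leaves of this unitary commit tree and, for $i < m$, ignore their single transition child. The transition edges connect the bottom vertices of layer $i$ to the top vertices of layer $i+1$.

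I would maintain the following invariant: after layers $m, m-1, \dots, i$ have been treated, the current tree is a condensation of $T_0$ in which layers $i+1, \dots, m$ have been shrunk to heights $2^{\ell_{i+1}}, \dots, 2^{\ell_m}$, layers $0, \dots, i$ are as in $T_0$, and every vertex at the top of the current layer $i$ has the property that all leaves of $T_0$ still lying below it carry one and the same $d$-colour. The base case $i = m$: apply Corollary~\ref{corr:condensation} with the $2^s$-colouring $d$ separately to each unitary commit tree comprising layer $m$; each is condensed to height $2^{\ell_m}$ with its leaves (which are leaves of $T_0$) $d$-monochromatic. The inductive step: assuming layers $i+1,\dots,m$ are done, each bottom vertex $v$ of layer $i$ has a unique transition child $w_v$, now a top vertex of layer $i+1$, and by the invariant all leaves of $T_0$ below $w_v$ — equivalently, below $v$ — share one colour, which we call $c(v)$. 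Applying Corollary~\ref{corr:condensation} with the $2^s$-colouring $v \mapsto c(v)$ to each unitary commit tree of layer $i$ shrinks it to height $2^{\ell_i}$ while making the surviving bottom vertices of layer $i$ below each top vertex $c$-monochromatic, hence all leaves of $T_0$ below that top vertex share one colour; the invariant for $i-1$ follows.

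After the loop reaches $i=0$ and we condense the single unitary commit tree that is layer $0$, we obtain a tree $T_1$ all of whose leaves carry one $d$-colour, i.e.\ $d$ is monochromatic on the leaves of $T_1$. That $T_1$ is a $(2^{\ell_0},\dots,2^{\ell_m})$-layered commit tree and a condensation of $T_0$ is checked as follows. The layer heights are $2^{\ell_0},\dots,2^{\ell_m}$ by construction. Each move we made replaces a subtree rooted at some $v$ by one rooted at a descendant $v'$ of $v$ inside the same layer; such a move is a (possibly iterated) condensation move of $T_0$, condensation moves compose, labels are never changed, and the containment relations among labels only get inherited by descendants — so all the other defining clauses of a layered commit tree (labels $c$-monochromatic with colour depending only on the layer, $H$-outputs depending only on the layer, degree one at layer boundaries with the successor's label below the root of the predecessor's label, the degree condition elsewhere) survive. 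In particular, if condensing layer $i$ deletes the root of one of its unitary commit trees, this just promotes some descendant $v'$ of that root to be the new top vertex of layer $i$, and since $S_{v'}$ lies below a leaf of the old top label — hence below the root of the label of the layer-$(i-1)$ vertex feeding the corresponding transition edge — the transition clause is unharmed.

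I expect the only real difficulty to be the bookkeeping of the preceding paragraph: verifying carefully that applying Corollary~\ref{corr:condensation} inside a single layer, with the colouring obtained by lifting colour classes through the transition edges, disturbs none of the structural requirements of a layered commit tree — especially in the corner case where a layer's top vertex is itself removed by the condensation.
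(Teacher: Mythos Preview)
Your proposal is correct and matches the intended argument: the paper states this as an unproven corollary of Corollary~\ref{corr:condensation}, and the expected justification is precisely the layer-by-layer application you describe, propagating a single colour upward through the transition edges. Your bookkeeping about the transition clause surviving when a layer's top vertex is replaced is the only point requiring care, and you handle it correctly.
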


\begin{lemma}
\label{lemma:committreefinish}
Given a colouring $c : 2^{<\omega} \to \mathbf{k}$ and a $(2^{n_0},\ldots,2^{n_{k-1}})$-layered commit tree $T$ such that $n_i > \lceil \log k \rceil$, there exists some $v \in T$ such that $S_v$ is extendible to a $c$-monochromatic copy of $2^{<\omega}$; and moreover we can compute such a $v$ given a labelling $d$ of the leaves of $T$ such that there exists a $c$-monochromatic copy of $2^{<\omega}$ of colour $d(u)$ below the root of $S_u$.
\begin{proof}
By Corollary \ref{corr:condensationlayered}, there is a condensation $T_1$ of $T$ such that $d$ is constant on the leaves of $T_1$, and $T_1$ is a $(n_0',n_1',\ldots,n_{k-1}')$-layered commit tree with $n'_i \geq 2$. Let $\ell \in \mathbf{k}$ be constant value returned by $d$. Then any non-leaf $u \in T_1$ such that $S_u$ has colour $\ell$ has the property that below any leaf of $S_u$ there exists a monochromatic copy of $2^{<\omega}$ of colour $\ell$, which means that we can extend $S_u$ to a monochromatic copy of $2^{<\omega}$.
\end{proof}
\end{lemma}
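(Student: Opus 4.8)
The statement to prove is Lemma~\ref{lemma:committreefinish}: given $c : 2^{<\omega} \to \mathbf{k}$ and a $(2^{n_0},\ldots,2^{n_{k-1}})$-layered commit tree $T$ with every $n_i > \lceil \log k \rceil$, there is a vertex $v \in T$ whose label $S_v$ extends to a $c$-monochromatic copy of $2^{<\omega}$; moreover such a $v$ can be computed from a labelling $d$ of the leaves of $T$ for which each $S_u$ (with $u$ a leaf) has a $c$-monochromatic copy of $2^{<\omega}$ of colour $d(u)$ below its root. The plan is to run the condensation machinery from Corollary~\ref{corr:condensationlayered} to collapse $T$ down to a layered commit tree whose layers are tall enough ($\geq 2$) and on which the auxiliary colouring $d$ is constant, and then to read off the desired vertex from the layer whose colour matches that constant value.

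\textbf{Step 1: Apply the condensation corollary.} The labelling $d$ is a $k$-colouring of the leaves of $T$; since $n_i > \lceil \log k \rceil$, we have $n_i \geq \lceil \log k \rceil + 1$, so write $n_i = \ell_i + s$ with $s = \lceil \log k \rceil$ and $\ell_i \geq 1$. Then $T$ is a $(2^{\ell_0 + s},\ldots,2^{\ell_{k-1}+s})$-layered commit tree and $d$ is a $2^s$-colouring (a $k$-colouring fits into $2^{\lceil \log k\rceil}$ colours) of its leaves. Corollary~\ref{corr:condensationlayered} then yields a condensation $T_1$ of $T$ which is a $(2^{\ell_0},\ldots,2^{\ell_{k-1}})$-layered commit tree on which $d$ is monochromatic; since each $\ell_i \geq 1$, each layer of $T_1$ has height $2^{\ell_i} \geq 2$. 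Crucially, I would note that condensation is effective: the proof of the underlying Lemma (building $T_1$ layer by layer) is a computable procedure given $c$, $T$, and $d$, so we obtain $T_1$ computably from the data in the "moreover" clause.

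\textbf{Step 2: Read off the extendible vertex.} Let $\ell \in \mathbf{k}$ be the constant value of $d$ on the leaves of $T_1$. Because $T_1$ is a layered commit tree with $k$ layers and each layer has a distinct colour, and because $d$ only takes the single value $\ell$ on leaves, the colour of the leaf-label $S_u$ of every leaf $u$ of $T_1$ — which by the definition of a layered commit tree is the colour of the layer $L(u) = k-1$, the bottom layer — must be such that there is a $c$-monochromatic copy of $2^{<\omega}$ of colour $\ell$ below the root of $S_u$. Now pick the unique layer index $i^*$ whose layer-colour equals $\ell$ (if $\ell$ is not a layer colour, then no leaf could satisfy the hypothesis on $d$, so this case cannot arise), and take any non-leaf vertex $u \in T_1$ sitting on a layer boundary with $L(u) = i^*$, so $S_u$ has colour $\ell$. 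Every leaf of $T_1$ lying below $u$ has a $c$-monochromatic copy of $2^{<\omega}$ of colour $\ell$ below its root; since $S_u$ is itself a $\ell$-coloured finite subtree isomorphic to some $2^{\leq \ell_u}$ whose leaves are (by the layered structure) ancestors of leaves of $T_1$, we can graft these monochromatic copies onto the leaves of $S_u$ to obtain a $c$-monochromatic copy of $2^{<\omega}$ extending $S_u$. This $u$, viewed as a vertex of the original $T$ via the condensation, is the required $v$, and it was produced computably.

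\textbf{Main obstacle.} The genuinely delicate point is the bookkeeping in Step 2: making sure that after condensation the leaves of $T_1$ really do retain the property encoded by $d$, i.e.\ that the monochromatic copy of colour $d(u)$ guaranteed below the root of a leaf-label $S_u$ of the \emph{original} $T$ transfers to the corresponding leaf of $T_1$. This works because condensation only ever replaces a subtree rooted at $v$ by the subtree rooted at a successor $v'$; the leaves of the condensed tree are still leaves of the original (or lie below original leaves in the appropriate sense), so the guarantee on $d$ is inherited verbatim. A second routine-but-necessary check is that collapsing to $\ell_i \geq 1$ is enough: we only need each surviving layer to contain at least one non-leaf vertex whose label is a genuine subtree, which requires height $\geq 2$, exactly what $\ell_i \geq 1$ gives. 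Everything else — effectivity of the condensation, the fact that a $k$-colouring is a $2^{\lceil \log k\rceil}$-colouring, and the grafting construction — is straightforward.
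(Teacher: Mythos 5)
Your proof is correct and follows the paper's argument exactly: condense $T$ via Corollary \ref{corr:condensationlayered} so that $d$ becomes constant with value $\ell$ while each layer keeps height at least $2$, then extend the label $S_u$ of a non-leaf $u$ in the layer of colour $\ell$ by grafting on the $\ell$-coloured monochromatic copies guaranteed below the leaves. The extra bookkeeping you supply (effectivity of the condensation, existence of the layer whose colour is $\ell$, and why height $\geq 2$ suffices) matches what the paper leaves implicit.
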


\begin{proof}[Proof of Theorem \ref{theo:finitarypart}]
In the case that $j \leq k$, the claim already follows from Propositions \ref{prop:fintcn}, \ref{prop:tttcn} together with the fact that $\RT^1_j \leqW \TT_k$ for $j \leq k$. Thus, we can focus our attention to the case that $j > k$.

Assume that $f : \subseteq \Baire \mto \mathbf{j}$ satisfies $f \leqW \TT_k$, with $H$ as the outer reduction witness and $K$ as the inner. We are given an input $p$ for $f$, and consider the colouring $c = K(p)$ and view $H$ as a continuous partial function producing elements of $\mathbf{j}$ given sufficiently long prefixes of monochromatic subtrees of $2^{<\omega}$. We will enumerate a sequence of up to $k$ commit trees such that the $i$-th tree we produce has $i$ layers, and such that the functions mapping $L(v)$, the layer of a vertex $v$, to which $\ell \in \mathbf{j}$ we get from applying $H$ to $S_v$, agree on their common domains for all the trees we built. We will then show that using $\mathrm{KL}$, we can compute some $i$ such that one of the commit trees we built has a vertex $v$ with $L(v) = i$ such that $S_v$ is extendible to a $c$-monochromatic copy of $2^{<\omega}$. Since this is a problem with codomain $\mathbf{k}$, and since $\Fin_k(\mathrm{KL}) \equivW \RT^1_k$ \cite[Theorem 13]{paulypradicsolda}, actually $\RT^1_k$ suffices to identify $i$. Moreover, given $i$ we can compute some $\ell \in \mathbf{j}$ such that there exists a $c$-monochromatic copy of $2^{<\omega}$ on a prefix of which $H$ would return $\ell$. This establishes $\Fin_j(\TT_k) \leq \RT^1_k$, as we set out to do.

By appealing to the tree theorem itself, we see that there are unitary commit trees of arbitrary height. Let $h > \lceil \log (kj) \rceil$. We can thus search for some $i_0 \in \mathbf{k}$ and $\ell_0 \in \mathbf{j}$ and a unitary commit tree $T_0$ of height $2^{1+kh}$. We can view $T_0$ equivalently as a $(2^{1+kh})$-layered commit tree.

We then search below each root of some $S_v$ for a leaf $v$ of $T_0$ for a unitary commit tree of height $2^{1+h(k-1)}$ for a colour other than $i_0$. If this search never succeeds, $T_0$ is the only layered commit tree we construct. Otherwise, we can label the leaves of $T_0$ by the pair $(i,\ell)$ such that we found a commit tree for $i,\ell$ below it. By Corollary \ref{corr:condensation} we can condense $T_0$ to some $T_0'$ of height $2^{1+h(k-1)}$ such that the same choice $i_1,\ell_1$ works for all remaining leaves. We then built a $(2^{1+h(k-1)},2^{1+h(k-1)})$-layered commit tree $T_1$ by appending the unitary commit trees we found to the leaves of $T_0'$.

The process continues, with us now searching for unitary commit trees of height $2^{1+h(k-2)}$ below each leaf $v$ of $T_1$ with a colour other than $i_0$ or $i_1$, and so on. As there are only $k$ colours, we built at most $k$ layered commit trees. Moreover, being on the $b$-th layer of any of the layered commit trees we built means that the answer is $\ell_b \in \mathbf{j}$, as intended.

As $\mathrm{KL} \equivW \widehat{\RT^1_k}$, by Corollary \ref{corr:isthereamonochromatictree} we can use $\mathrm{KL}$ to obtain from $c : 2^{<\omega} \to \mathbf{k}$ some $d : 2^{<\omega} \to \mathbf{k}$ such that $d(v) = i$ means that there exists a $c$-monochromatic copy of $2^{<\omega}$ below $v$ with colour $i$. As $T_0$ is always defined, we can compute it and inspect the colour $d$ assigns to the leaves of the $S_u$ where $u$ ranges over the leaves of $T_0$. If there is a leaf $u$ such that all leaves of $S_u$ get assigned $i_0$ by $d$, then $S_u$ extends to a monochromatic copy of $2^{<\omega}$, and thus $0$ is a valid answer to ``Which layer contains an extendible finite subtree'', which is what we are trying to solve. If there is no such leaf, the search leading to the construction of $T_1$ will be successful. We then inspect $d$ on the leaves of the $S_u$ where $u$ ranges over the leaves of $T_1$; and will be able to either identify some $S_u$ which is extendible (either on the first or second layer), or we are assured that $T_2$ is going to be well-defined. By Lemma \ref{lemma:committreefinish}, we will find something extendible at the very least once we have constructed $T_{k-1}$.
\end{proof} 

\section*{Acknowledgements}
I am grateful to Damir Dzhafarov, Kenneth Gill and Manlio Valenti for discussions leading to the results in this note.

\bibliographystyle{eptcs}
\bibliography{spieltheorie}

\end{document}